\newtheorem{theo}{Theorem}[section]
\newtheorem{lemma}[theo]{Lemma}
\newtheorem{assume}[theo]{Assumption}
\newtheorem{propo}[theo]{Proposition}
\newtheorem{defi}[theo]{Definition}
\newtheorem{coro}[theo]{Corollary}
\newtheorem{rem}[theo]{Remark}
\newtheorem{exam}[theo]{Example}
\newtheorem{exams}[theo]{Examples}
\newcommand\ksst{\preceq_\ck}
\newcommand\lsst{\subseteq_L}
\newcommand\lsk{LS(\ck)}
\newcommand\lsdk{LS^d(\ck)}
\newcommand\Ind{\operatorname{Ind}}
\newcommand\Mod{\operatorname{Mod}}
\newcommand\op{\operatorname{op}}
\newcommand\metr{\operatorname{\bf d}}
\newcommand\gat{\operatorname{ga-S}}
\newcommand\Set{\operatorname{\bf Set}}
\newcommand\Met{\operatorname{\bf Met}}
\newcommand\Str{\operatorname{\bf Str}}
\newcommand\mStr{\operatorname{\bf mStr}}
\newcommand\Ab{\operatorname{\bf Ab}}
\newcommand\Lin{\operatorname{\bf Lin}}
\newcommand\dc{\operatorname{dc}}
\newcommand\gatp{\operatorname{ga-tp}}
\newcommand\colim{\operatorname{colim}}
\newcommand\monst{\mathfrak {C}}
\newcommand\reals{\mathbb {R}}
\newcommand\ca{\mathcal {A}}
\newcommand\cc{\mathcal {C}}
\newcommand\cf{\mathcal {F}}
\newcommand\ck{\mathcal {K}}
\newcommand\cl{\mathcal {L}}
\newcommand\crr{\mathcal {R}}
\newcommand\cs{\mathcal {S}}
\date{April 7, 2016}
\begin{document}
\title[Metric abstract elementary classes as accessible categories]
{Metric abstract elementary classes as accessible categories}
\author[M. Lieberman and J. Rosick\'{y}]
{M. Lieberman and J. Rosick\'{y}}
\thanks{Supported by the Grant Agency of the Czech Republic under the grant 
               P201/12/G028.} 
\address{
\newline M. Lieberman\newline
Department of Mathematics and Statistics\newline
Masaryk University, Faculty of Sciences\newline
Kotl\'{a}\v{r}sk\'{a} 2, 611 37 Brno, Czech Republic\newline
lieberman@math.muni.cz\newline
\newline J. Rosick\'{y}\newline
Department of Mathematics and Statistics\newline
Masaryk University, Faculty of Sciences\newline
Kotl\'{a}\v{r}sk\'{a} 2, 611 37 Brno, Czech Republic\newline
rosicky@math.muni.cz
}
 
\begin{abstract}
We show that metric abstract elementary classes (mAECs) are, in the sense of \cite{LR}, coherent accessible categories with directed colimits, with concrete $\aleph_1$-directed colimits and concrete monomorphisms.  More broadly, we define a notion of {\it $\kappa$-concrete AEC}---an AEC-like category in which only the $\kappa$-directed colimits need be concrete---and develop the theory of such categories, beginning with a category-theoretic analogue of Shelah's Presentation Theorem and a proof of the existence of an Ehrenfeucht-Mostowski functor in case the category is large.  For mAECs in particular, arguments refining those in \cite{LR} yield a proof that any categorical mAEC is $\mu$-$\metr$-stable in many cardinals below the categoricity cardinal.
\end{abstract} 
\keywords{}
\subjclass{}

\maketitle

\section{Introduction}

This paper may be regarded as an addition to the expanding literature on the interactions between category theory and abstract model theory and, in particular, as an extension of the results of \cite{LR} from abstract elementary classes (AECs) to metric abstract elementary classes (mAECs).  The latter may be thought of as a kind of amalgam of AECs with the program of continuous logic.  Continuous logic has its origins in the work of Chang and Keissler, and has subsequently been developed by Henson, Iovino, Usvyatsov and Ben-Yaacov, among others, always with an eye toward applications of model theory to structures arising in analysis.  Thus in an mAEC, as opposed to an AEC, the structures under consideration typically have as their underlying universe of discourse not a discrete set but a complete metric space.

In \cite{LR}, the authors develop a hierarchy of accessible categories with additional structure, resulting, ultimately, in a precise characterization of AECs as concrete categories.  Roughly speaking, the hierarchy is as follows, assuming throughout that all morphisms are monomorphisms:
\begin{enumerate}\item $\ck$ is an accessible category (see \cite{AR}, \cite{MP}).
\item $\ck$ is an accessible category with directed colimits (see \cite{BR}, \cite{R}).
\item $(\ck, U)$ is an accessible category with concrete directed colimits and concrete monomorphisms, i.e. $\ck$ is equipped with a faithful functor $U:\ck\to\Set$ that preserves directed colimits and monomorphisms.
\item $(\ck, U)$ is a coherent accessible category with concrete directed colimits and concrete monomorphisms, where ``coherence'' is a property of $U$ corresponding to the coherence axiom for AECs.
\item $(\ck, U)$ is a coherent accessible category with concrete directed colimits and concrete monomorphisms, and satisfies the iso-fullness condition described in Remark 3.5 in \cite{LR}---such a category is equivalent to an AEC.\end{enumerate}
Certain essential results from the theory of AECs are shown to hold at greater levels of generality: categories of the form (2) satisfy a presentation theorem generalizing that of Shelah and, if large, admit a robust EM-functor.  Categories of the form (3) allow the development of Galois types and satisfy a generalization of Boney's theorem on tameness under the assumption of a proper class of strongly compact cardinals (see \cite{B}).  Categories of the form (4) satisfy the essential technical condition that Galois saturation corresponds to, in AEC terms, model-homogeneity, and support the development of a fragment of classification theory.  

We show in Section~\ref{maecsareacc} that any mAEC $\ck$ is an accessible category with directed colimits, and note that, if we take $U:\ck\to\Set$ to be the usual underlying set functor, $\ck$ is coherent with concrete monomorphisms.  As is well known, though, directed colimits in $\ck$ need not be concrete: when taking the colimit of a chain of structures in $\ck$, we must, in general, take the completion of the union of the underlying sets.  This would seem to place us, at best, in type (2) above, which is already sufficient to give a presentation theorem and guarantee the existence of an EM-functor for a general mAEC---this is in itself a generalization of \cite{HH}, the results of which hold only in the homogenous case.  As we note in Remark~\ref{ldircolimsconcr}, however, mAECs do have concrete $\aleph_1$-directed colimits, which suggests that we may benefit from a generalization of the hierarchy of \cite{LR}, considering categories with concrete $\kappa$-directed colimits for some $\kappa$.  In case a category of this form has (not necessarily concrete) directed colimits, is coherent, has concrete monomorphisms, and is suitably replete and iso-full---the conditions of (4) above---we call it a {\it $\kappa$-concrete AEC}, or {\it $\kappa$-CAEC} for short.  Incidentally, there is an alternative option already being pursued in, e.g., \cite{muaecs}, namely to consider classes of structures which are indistinguishable from AECs, except insofar as they are only required to have $\kappa$-directed colimits and satisfy a subtle weakening of the usual L\"owenheim-Skolem axiom: this notion, \textit{$\kappa$-AEC}, is more general than the one we investigate here.

We introduce the definition of $\kappa$-CAEC in Section~\ref{kaec}, and develop a few basic results for such categories.  Most importantly, we show that the results of \cite{LR} can be generalized to this context, with only minor modifications.  In fact, many arguments go through without change: an analogue of Shelah's Presentation Theorem and the existence of EM-functors for large $\kappa$-CAECs follow immediately.  As we will see, though, by contrast to case (4) above, if $\ck$ is a $\kappa$-CAEC, the functor $U:\ck\to\Set$ need not preserve all sizes $\lambda>\kappa$, but rather preserves $\lambda$-presentable objects for $\lambda\triangleright\kappa$, where $\triangleright$ is the relation described in \cite{MP}.  The end result is a slight weakening of the results of Sections 6 and 7 of the earlier paper (concerning, respectively, the equivalence of category-theoretic saturation and Galois-saturation, and stability and the existence of saturated models in categorical AECs) which nonetheless hold for general $\kappa$-CAECs.  In the case of mAECs, in particular, they are stronger than existing results along these lines.

\section{Metric AECs}\label{maecs}

As mentioned above, we work in the context of metric AECs (mAECs), as considered in \cite{HH}, \cite{VZ}, and \cite{Z}: classes in which the structures have complete metric spaces rather than sets as their sorts, and where the interpretations of the function and relation symbols are required to behave well with respect to the appropriate metrics.  To be precise, let $L$ be a language with sorts $\cs\cup\{\reals\}$, function symbols $\cf\cup\{d_\sigma\}_{\sigma\in \cs}$, relation symbols $\crr$, and constant symbols $\cc$.

The $d_\sigma$ are to be interpreted as $\reals$-valued metrics on the sorts $\sigma$.  The other symbols have prescribed arities as well: each $c\in\cc$ is of sort $\nu(c)$, each $R\in\crr$ is a predicate on a product sort $\nu(R)_1\times {\nu(R)_2}\times\dots\times {\nu(R)_n}$, and so on.

\begin{defi}\label{mlstruct} {\em A many-sorted metric $L$-structure is given by a tuple of interpretations of the symbols in $L$,
$$( \{ (\sigma^M,d_\sigma^M) \}_{\sigma\in\cs}, \reals^M, \{ R^M \}_{R\in\crr}, \{ F^M \}_{F\in\cf}, \{ c^M\}_{c\in\cc} )$$
Here each $(\sigma^M,d_\sigma^M)$ is a complete metric space, $\reals^M$ is a copy of the real numbers, and each $c^M$ is an element of ${\nu(c)}^M$.  The interest lies in the functions and relations:
\begin{enumerate}\item Each $F\in\cf$ is interpreted as a function 
$$F^M:\nu(F)_1^M\times\dots\times \nu(F)_m^M\to \nu(F)^M$$
that is continuous (with respect to the product metric on the left side). 
\item Each $R\in\crr$ is interpreted as a continuous predicate, 
$$R^M:\nu(R)_1^M\times\dots\times\nu(R)_n^M\to [0,1]$$  
(again, with respect to the product metric on the left side).
\end{enumerate}}\end{defi}

Note that the product metric is the maximum metric and continuity then means that the convergence of sequences is preserved.
This follows \cite{VZ} and \cite{Z} while \cite{HI} uses uniform continuity. In fact, the fine-grained distinction between the two versions does not register in the category-theoretic characterization of mAECs that we develop in Sections 3 and 4---the results of the ensuing sections apply equally in both cases.

We form a category of metric $L$-structures, $\mStr(L)$, by taking the morphisms to be the metric $L$-structure embeddings, i.e. maps $f:M\to N$ satisfying
\begin{enumerate}\item For all $c$, $f(c^M)=c^N$.
\item For all $F$ and $\bar{a}$ in $M$ of appropriate arity, $f(F^M(\bar{a}))=F^N(f(\bar{a}))$.
\item For all $R$, $\bar{a}$ in $M$ of appropriate arity and $k\in [0,1]$, $d(\bar{a},[R^M]^{-1}(k))=d(f(\bar{a}),[R^N]^{-1}(k))$
(with the product metric).
\end{enumerate}

Notice that, because we have included the metrics in the language, (2) guarantees that any $\mStr(L)$-map is a sortwise isometry.

Just as AECs are traditionally axiomatized as a subclasses---better, subcategories---of an ambient category $\Str(L)$ of discrete structures, mAECs are axiomatized as subcategories of $\mStr(L)$.  As with AECs, the concern is to refine the notion of substructure/embedding, and the axioms by which this is achieved are almost identical.  The only essential changes are that we must, in general, take the completions of unions of chains, and that density character takes the place of size in the metric context.  Recall:

\begin{defi} {\em The density character of a complete metric space $X$, denoted $\dc(X)$, is the cardinality of the smallest dense subset of $X$.  We define the density character of a subset $A\subseteq X$ to be the density character of its completion, i.e. $\dc(A)=\dc(\overline{A})$.}\end{defi}

Noting that each metric $L$-structure $M$ is not a metric space but rather a collection of metric spaces, one for each sort, we define $\dc(M)$ to be the sum of the density characters of its sorts.

\begin{defi} {\em Let $\ck$ be a class of metric $L$-structures in the sense of Definition~\ref{mlstruct}, and let $\ksst$ be a partial order on $\ck$.  We say that $(\ck,\ksst)$ is a metric AEC is $\ksst$ refines the usual metric substructure relation and the following additional conditions hold:
\begin{enumerate}\item $\ck$ and $\ksst$ are closed under isomorphism.
\item (Colimits of chains) If $\langle M_i\,|\,i<\lambda\rangle$ is a $\ksst$-increasing chain, then
\begin{enumerate}\item the function and predicate symbols in $L$ can be extended uniquely from $\bigcup_{i<\lambda} M_i$ to its completion in such a way that $\overline{\bigcup_{i<\lambda} M_i}\in\ck$,
\item for all $i<\lambda$, $M_i\ksst\overline{\bigcup_{i<\lambda} M_i}\in\ck$, and
\item if $M_i\ksst N$ for all $i<\lambda$, then $\overline{\bigcup_{i<\lambda} M_i}\in\ck\ksst N$.\end{enumerate}
\item (Coherence) If $M_1\lsst M_2\ksst M_3$ and $M_1\ksst M_3$, then $M_1\ksst M_2$.
\item (L\"owenheim-Skolem) There exists an infinite cardinal $\lsdk$ such that for any $M\in\ck$ and subset $A\subseteq M$, there is an $N$ in $\ck$ with $\dc(N)\leq \dc(A)+\lsdk$ such that $A\subseteq N\ksst M$.
\end{enumerate}}
\end{defi}

\begin{rem} {\em AECs correspond to the special case in which all of the metrics are discrete.}\end{rem}

\begin{defi} {\em Given $f:M\to N$, $M$ and $N$ in $\ck$, we say that $f$ is a \textit{$\ck$-embedding} if $f[M]\ksst N$.}\end{defi}

The axiom concerning completions of unions of chains guarantees that $\ck$ has colimits of $\ksst$-chains which, by Corollary 1.7 in \cite{AR}, is equivalent to having directed colimits of $\ck$-embeddings.  Note that closure of mAECs under directed colimits is also proved directly as Corollary 1.2.6 in \cite{Z}.  For emphasis:

\begin{rem} {\em Any mAEC $\ck$ has arbitrary directed colimits.}\end{rem}

\begin{rem}\label{badstructs} {\em Closure under directed colimits is a remarkably strong assumption on an mAEC $\ck$ and its embeddings.  In particular, we cannot hope that a general category of metric structures $\mStr(L)$ will have directed colimits, as we see in the following example of \cite{kirby}. 

Consider the one sorted language with a single unary function symbol $f$.  Consider the chain of $\mStr(L)$-stuctures $\langle M_n\rangle_{n<\omega}$, where each $M_n$ has underlying set $\{0\}\cup\{ 1/k \,|\, 0<k\leq n+1 \}$ equipped with the metric inherited from $\reals$, and $f$ is interpreted as the characteristic function of the subset of nonzero elements: $f^{M_i}(0)=0$, but has value $1$ otherwise.  The union of the underlying sets $\{0\}\cup\{1/k\,|\,1<k<\omega\}$ is itself complete, and we are forced to take $f^{M}=\bigcup_{n<\omega} f^{M_n}$, which is again the characteristic function of the subset of nonzero elements.  This function is not continuous, meaning that $M$ cannot belong to $\mStr(L)$.

In fact, as noted in \cite{kirby}, this example also shows that categories of metric structures need not have all directed colimits even under the stronger assumption that the interpretations of function and predicate symbols are not merely continuous, but {\em uniformly continuous}.  The still more restrictive case of {\em contractions} is considered in Example~\ref{kaecexams}.
}\end{rem}

It is important to note that these directed colimits are, in general, not concrete: the underlying set of the colimit of a $\ksst$-increasing chain will be the completion of the chain's union, which need not correspond to the union itself.  That is, if $U:\ck\to\Set$ is the usual underlying set functor, 

\begin{rem}\label{dircolimsnonconcr} {\em In general, $(\ck,U)$ will not have concrete directed colimits.}\end{rem}

Given any uncountable regular cardinal $\lambda$, however, the colimit of any $\ksst$-increasing $\lambda$-chain (or, indeed, any $\lambda$-directed system of $\ksst$-substructures) should have precisely the union as its underlying set.  We prove the parenthetical, assuming, for simplicity, that our structures are one-sorted: given a $\lambda$-directed system of $\ksst$-substructures $\langle M_i\,|\,i\in I\rangle$, consider $x\in\overline{\bigcup_{i\in I}M_i}$.  Then $x$ is the limit of a sequence $\langle x_n\,|\,n\in\omega\rangle$ in $\bigcup_{i\in I}M_i$ and, by $\lambda$-directedness of the union, this sequence actually lies in some $M_j$.  As $M_j$ is complete, $x\in M_j\subseteq\bigcup_{i\in I}M_i$, meaning that $\overline{\bigcup_{i\in I}M_i}=\bigcup_{i\in I}M_i$.  Hence 

\begin{rem}\label{ldircolimsconcr}{\em For any mAEC $\ck$, $(\ck,U)$ has concrete $\lambda$-directed colimits, for all uncountable regular $\lambda$.}\end{rem}

\section{Metric AECs as Accessible Categories}\label{maecsareacc}

Accessible categories were introduced in \cite{MP} as categories closely connected with categories of models of $L_{\kappa,\lambda}$
theories---essential background can be found in \cite{MP} and \cite{AR}, while concrete accessible categories are treated in \cite{BR} and \cite{LR}. Roughly speaking, an accessible category is one that is closed under certain directed colimits, and whose objects can be built via certain directed colimits from a set of small objects.  To be precise, we say that a category $\ck$ is $\lambda$-\textit{accessible}, $\lambda$ a regular cardinal, if it closed under $\lambda$-directed colimits (i.e. colimits indexed by a $\lambda$-directed poset) and contains, up to isomorphism, a set $\ca$ 
of $\lambda$-presentable objects such that each object of $\ck$ is a $\lambda$-directed colimit of objects from $\ca$. 

Here $\lambda$-presentability functions as a notion of size that makes sense in a general, i.e. non-concrete, category: we say an object $M$ is $\lambda$-\textit{presentable} if its hom-functor $\ck(M,-):\ck\to\Set$ preserves $\lambda$-directed colimits. Put another way, an object $M$ is $\lambda$-presentable if for any morphism $f:M\to N$ with $N$ a $\lambda$-directed colimit $\langle \phi_\alpha:N_\alpha\to N\rangle$, $f$ factors essentially uniquely through one of the $N_\alpha$, i.e. $f=\phi_\alpha f_\alpha$ for some $f_\alpha:M\to N_\alpha$. 

Recall that the \textit{presentability rank} of an object $K$ in a category $\ck$ is the smallest regular cardinal $\kappa$ such that $K$ is $\kappa$-presentable.  Following Lemma 4.2 in \cite{BR}, if $\ck$ is a $\lambda$-accessible category with directed colimits and $K\in\ck$ is not $\lambda$-presentable then the presentability rank $\kappa$ of $K$ is a successor cardinal, i.e. $\kappa=|K|^+$ for some cardinal $|K|$. We think of $|K|$ as the \emph{internal size of $K$ in $\ck$}, or simply the \emph{size of $K$}.

\begin{theo}\label{maecsacc} Let $\ck$ be an mAEC.  Then $\ck$ is $\lambda$-accessible with directed colimits for all uncountable regular cardinals $\lambda>\lsdk$.  Moreover, for any uncountable cardinal $\lambda$, an object $M\in\ck$ is of presentability rank $\lambda^+$ if and only if $\dc(M)=\lambda$.\end{theo}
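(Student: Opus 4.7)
The plan is to prove both statements in tandem: for uncountable regular $\lambda > \lsdk$, the $\ksst$-substructures of $M$ of density character $< \lambda$ form a canonical $\lambda$-directed diagram presenting $M$ and are precisely the $\lambda$-presentable objects of $\ck$; running the same argument one level up yields the presentability-rank characterization.

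For accessibility, fix uncountable regular $\lambda > \lsdk$. Since $\ck$ has all directed colimits (by the remark following the mAEC axioms), it suffices to exhibit a set of $\lambda$-presentable generators. For each $M \in \ck$, I take $\cs_M = \{N \ksst M : \dc(N) < \lambda\}$. This is $\lambda$-directed by L\"owenheim-Skolem: given fewer than $\lambda$ members, their union has density character $< \lambda$ by regularity of $\lambda$ together with $\lambda > \lsdk$, and LS provides an upper bound in $\cs_M$. Since $\lambda$-directed colimits in $\ck$ are concrete (Remark~\ref{ldircolimsconcr}) and LS places each element of $M$ in some member of $\cs_M$, the colimit of $\cs_M$ in $\ck$ is $M$. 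To see that each $N \in \cs_M$ is $\lambda$-presentable, consider a morphism $f : N \to \colim_i M_i$ with the diagram $\lambda$-directed; a dense subset $D \subseteq f[N]$ of size $\leq \dc(N) < \lambda$ lands in some $\phi_i[M_i]$ by $\lambda$-directedness, and since $\phi_i[M_i]$ is isometric to the complete space $M_i$ it is closed in the (concrete) colimit, forcing $f[N] = \overline{D} \subseteq \phi_i[M_i]$. Then $\phi_i^{-1} \circ f : N \to M_i$ is an $\mStr(L)$-morphism, and the coherence axiom applied to $f[N] \lsst \phi_i[M_i] \ksst \colim_i M_i$ (together with $f[N] \ksst \colim_i M_i$, since $f$ is a $\ck$-embedding) promotes it to a $\ck$-embedding. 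Such $N$ form a set up to isomorphism by standard cardinality estimates in a bounded language.

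For the presentability-rank biconditional at uncountable $\lambda$, the upper bound $\rank(M) \leq \lambda^+$ when $\dc(M) = \lambda$ follows by running the factorization argument above at level $\lambda^+$, which only requires that $\lambda^+$ be uncountable regular so that colimits stay concrete. For the lower bound $\rank(M) > \lambda$, assuming first $\lambda > \lsdk$, I write $M = \colim \cs_M$ and observe that any factorization $\id_M = \phi_i \circ f_i$ through some $N_i \in \cs_M$ would force the leg $\phi_i$ to be a bijective $\ck$-embedding, and hence an isomorphism in $\ck$ (its set-theoretic inverse is an $\mStr(L)$-morphism since $\phi_i$ is an isometry preserving $L$-structure, and it trivially satisfies $N_i \ksst N_i$), contradicting $\dc(N_i) < \lambda = \dc(M)$. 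The converse direction then follows from these two bounds by the uniqueness of presentability rank.

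The main obstacle I expect is the edge case $\lambda \leq \lsdk$ of the lower-bound direction: here L\"owenheim-Skolem does not supply strictly smaller $\ksst$-substructures of $M$, and so the witness $\lambda$-directed diagram must be produced by other means---either by exploiting the concrete $\aleph_1$-directed colimit structure on Cauchy data directly, or by verifying separately that no $M$ with $\dc(M) = \lambda \leq \lsdk$ can attain rank exactly $\lambda^+$, so that the biconditional holds vacuously on that side. The secondary technicalities---closedness of $\phi_i[M_i]$ in the concrete colimit and the correct invocation of coherence to lift a set-theoretic factorization into $\ck$---are routine but must be executed carefully to keep the metric and categorical bookkeeping aligned.
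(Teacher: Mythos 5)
Your proposal follows essentially the same route as the paper: its Lemmas~\ref{ldircolim}, \ref{dctopres}, and \ref{prestodc} are precisely your three steps (the canonical $\lambda$-directed system of small $\ksst$-substructures obtained from L\"owenheim--Skolem and coherence, the dense-subset factorization through a concrete $\lambda$-directed colimit using completeness and coherence, and the factorization of the identity map to bound density character by presentability rank). The edge case you flag ($\lambda\leq\lsdk$ in the lower-bound direction) is likewise passed over in the paper's own derivation of the moreover clause, which invokes Lemma~\ref{prestodc} for uncountable regular $\mu\leq\lambda$ even though that lemma's proof rests on Lemma~\ref{ldircolim}, stated only for $\mu>\lsdk$; so your concern identifies a minor imprecision shared by both arguments rather than a defect of yours alone.
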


The moreover clause amounts to the assertion that an object in an mAEC is of size $\lambda$ if and only if $\dc(M)=\lambda$, i.e. the category-theoretic notion of size matches up perfectly with density character.  We proceed by a series of easy lemmas, paralleling the proof of the analogous result for AECs in $\S 4$ of \cite{L}.  We again work with one-sorted structures---the many-sorted case follows easily.

\begin{lemma}\label{ldircolim}Let $\ck$ be an mAEC.  For any regular $\lambda>\lsdk$, each object of $\ck$ is a $\lambda$-directed colimit of its $\ksst$-substructures of density character less than $\lambda$.\end{lemma}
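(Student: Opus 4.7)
My plan is as follows. Fix $M\in\ck$, and let $I$ denote the poset of $\ksst$-substructures $N\ksst M$ with $\dc(N)<\lambda$, ordered by $\ksst$. I will show (a) that $I$ is $\lambda$-directed, and (b) that the canonical cocone of $\ksst$-inclusions $(N\hookrightarrow M)_{N\in I}$ exhibits $M$ as the colimit in $\ck$ of the inclusion diagram $I\hookrightarrow\ck$.

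For (a), take $\{N_j\}_{j\in J}\subseteq I$ with $|J|<\lambda$, and set $A=\bigcup_{j\in J} N_j\subseteq M$. Since $\lambda$ is uncountable regular and each $\dc(N_j)<\lambda$, we have $\dc(A)\leq\sum_{j\in J}\dc(N_j)<\lambda$. The L\"owenheim-Skolem axiom then produces $N\ksst M$ with $A\subseteq N$ and $\dc(N)\leq\dc(A)+\lsdk<\lambda$. For each $j$, since both $N_j$ and $N$ sit inside $M$ with all structure inherited, $N_j\lsst N$ in the sense of ordinary metric substructure; combined with $N_j\ksst M$ and $N\ksst M$, coherence upgrades this to $N_j\ksst N$. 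Hence $N\in I$ is an upper bound for $\{N_j\}_{j\in J}$, establishing $\lambda$-directedness.

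For (b), a further application of L\"owenheim-Skolem to singletons $\{x\}\subseteq M$ shows that every element of $M$ is contained in some member of $I$, so $\bigcup_{N\in I}N=M$ set-theoretically. By Remark~\ref{ldircolimsconcr} the colimit of our $\lambda$-directed system in $\ck$ is concrete, with underlying set this very union—no completion is needed precisely because $\lambda$ is uncountable regular. The comparison map from the colimit to $M$ induced by the cocone of $\ksst$-inclusions is therefore a $\ck$-morphism which is bijective on underlying sets, and the mAEC axiom for directed colimits (or Corollary 1.2.6 of \cite{Z}) ensures that it is in fact a $\ck$-isomorphism.

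The only delicate point is the routine-but-essential use of coherence in (a), which is what converts raw metric substructure containment obtained from L\"owenheim-Skolem into the refined relation $\ksst$ needed to place the upper bound inside our diagram. Once this is secured, both $\lambda$-directedness and the identification of the colimit with $M$ follow directly from L\"owenheim-Skolem and the concreteness of $\lambda$-directed colimits recorded in Remark~\ref{ldircolimsconcr}.
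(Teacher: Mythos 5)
Your proof is correct and follows essentially the same route as the paper's: $\lambda$-directedness via the L\"owenheim--Skolem axiom applied to a union of fewer than $\lambda$ substructures, followed by coherence to upgrade the resulting containments to $\ksst$. The only differences are cosmetic---you apply L\"owenheim--Skolem directly to $\bigcup_j N_j$ rather than to a union of chosen dense subsets, and you explicitly carry out the identification of the colimit with $M$ via Remark~\ref{ldircolimsconcr}, a step the paper leaves implicit.
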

\begin{proof} Let $M\in\ck$, and let $\langle M_i\,|\,i\in I\rangle$ be the system of $\ksst$-substructures of $M$ of density character less than $\lambda$.  We wish to show that this system is $\lambda$-directed.  To that end, let $\{M_{i_\alpha}\,|\,\alpha<\nu<\lambda\}$.  As each $M_{i_\alpha}$ is of density character less than $\lambda$, they each contain a dense subset $X_{i_\alpha}$ of cardinality $\lambda$.  Let $X=\bigcup_{\alpha<\nu} X_{i_\alpha}$.  Notice that 
$$\dc(X)\leq|X|=\sum_{\alpha<\nu}|X_{i_\alpha}|<\lambda$$  
By the Downward L\"owenheim Skolem axiom, there is a model $M'\ksst M$ containing $X$ with $\dc(M')\leq \dc(X)+\lsdk <\lambda$.  So $M'=M_j$ for some $j\in I$.  Moreover, coherence implies that $M_{i_\alpha}\ksst M'$ for all $\alpha<\nu$; that is, the diagram is $\lambda$-directed, as claimed.\end{proof}

\begin{lemma}\label{dctopres} Let $\ck$ be an mAEC and $\lambda$ be an uncountable regular cardinal.  If $M\in\ck$ has $\dc(M)<\lambda$, it is $\lambda$-presentable.\end{lemma}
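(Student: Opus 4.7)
The plan is to show that any $\ck$-morphism $f: M \to N$ into a $\lambda$-directed colimit factors essentially uniquely through one of the vertices, by exploiting the concreteness of such colimits (Remark~\ref{ldircolimsconcr}) together with completeness of the sorts in $\ck$. Concretely, let $N = \colim \phi_\alpha: N_\alpha \to N$ be a $\lambda$-directed colimit in $\ck$, indexed by a $\lambda$-directed poset $I$, and let $f: M \to N$ be a $\ck$-morphism. Since $\lambda$ is uncountable regular, Remark~\ref{ldircolimsconcr} gives $U(N) = \colim U(N_\alpha)$ in $\Set$, so the underlying set of $N$ is the directed union of the images $\phi_\alpha[N_\alpha]$.

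Next I would choose a dense subset $D \subseteq M$ with $|D| \leq \dc(M) < \lambda$. For each $d \in D$ there is some $\alpha_d \in I$ with $f(d) \in \phi_{\alpha_d}[N_{\alpha_d}]$, and by $\lambda$-directedness applied to the set $\{\alpha_d : d \in D\}$ of cardinality $< \lambda$, there is a single $\beta \in I$ such that $f[D] \subseteq \phi_\beta[N_\beta]$. Because $N_\beta$ is complete and $\phi_\beta$ is an isometric (in particular, closed) embedding, $\phi_\beta[N_\beta]$ is a closed subset of $N$. Continuity of $f$ together with density of $D$ in $M$ then forces
$$f[M] = f[\overline{D}] \subseteq \overline{f[D]} \subseteq \overline{\phi_\beta[N_\beta]} = \phi_\beta[N_\beta],$$
so we may define $f_\beta: M \to N_\beta$ by $f_\beta(m) = \phi_\beta^{-1}(f(m))$. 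Since $\phi_\beta$ is a metric $L$-embedding and $f = \phi_\beta \circ f_\beta$ is a metric $L$-embedding, $f_\beta$ is itself a metric $L$-embedding.

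It remains to verify that $f_\beta$ is a \emph{$\ck$-embedding}, and this is where I expect the only subtle point to lie: we have $f[M] \ksst N$ and $\phi_\beta[N_\beta] \ksst N$ with $f[M] \subseteq \phi_\beta[N_\beta]$ as metric $L$-substructures, so by the coherence axiom $f[M] \ksst \phi_\beta[N_\beta]$, and transporting across the isomorphism $\phi_\beta: N_\beta \cong \phi_\beta[N_\beta]$ gives $f_\beta[M] \ksst N_\beta$ as required. Essential uniqueness is then routine: if $\phi_\beta \circ f_\beta = \phi_\gamma \circ f_\gamma$, pick $\delta \geq \beta, \gamma$ in $I$; then $\phi_\delta \circ (\phi_{\beta,\delta} \circ f_\beta)$ and $\phi_\delta \circ (\phi_{\gamma,\delta} \circ f_\gamma)$ agree on $D$, hence (by density and continuity, and injectivity of $\phi_\delta$) the two factorizations coincide when pushed to $N_\delta$.

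The main obstacle is the coherence step combined with the completeness-closedness argument that lets us pass from a dense subset to all of $M$; once concreteness of $\lambda$-directed colimits is in hand, everything else is forced. Note that the one-sorted reduction the authors adopt above works identically here, since the argument on $D$ can be carried out sort by sort using $\dc(M) = \sum_{\sigma} \dc(\sigma^M) < \lambda$.
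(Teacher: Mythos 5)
Your proof is correct and follows essentially the same route as the paper's: concreteness of the $\lambda$-directed colimit, a dense subset of size $<\lambda$ landed in a single vertex by directedness, completeness/closedness of $\phi_\beta[N_\beta]$ to capture all of $f[M]$, coherence to upgrade the containment to a $\ck$-embedding, and monicity of the cocone maps for essential uniqueness. The only cosmetic difference is that the paper picks the dense set directly inside $f[M]$ (using that $f$ is an isometry) rather than pushing forward a dense subset of $M$, and states uniqueness for a single vertex rather than across two; neither changes the substance.
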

\begin{proof}Suppose that $\dc(M)<\lambda$, and that $f:M\to N$ is a $\ck$-embedding, with $N$ the colimit of a $\lambda$-directed diagram, $\langle \phi_{ij}:N_i\to N_j\,|\,i\leq j\in I\rangle$, with cocone maps $\phi_i:N_i\to N$, i.e.
$$\xymatrix{ M \ar[rrrr]^f & & & & N & \\
& & & N_i \ar[rr]_{\phi_{ij}}\ar[ur]^{\phi_i} & & N_j\ar[ul]_{\phi_j}}$$
As this colimit is $\lambda$-directed, it is concrete: $|N|=\bigcup_{i\in I}|\phi_i[N_i]|$, where, for emphasis, the union is $\lambda$-directed.  As $f$ is a $\ck$-embedding, and therefore an isometry, $\dc(f[M])=\dc(M)<\lambda$.  That is, there is a dense subset $X\subseteq f[M]$ with $|X|<\lambda$.  By $\lambda$-directedness of the union, $X\subseteq \phi[N_i]$ for some $i\in I$.  As $\phi_i[N_i]$ is complete, $f[M]=\overline{X}\subseteq \phi[N_i]$ and, by coherence, $f[M]\ksst \phi_i[N_i]$.  Then $\phi_i^{-1}\circ f$ is the desired factorization of $f$ through $\phi_i$.  This factorization is unique, as well: given another $g:N_i\to N$ with $\phi_i\circ g=f=\phi_i\circ(\phi_i^{-1}\circ f)$, the fact that $\phi_i$ is a monomorphism guarantees that $g=\phi_i^{-1}\circ f$.
\end{proof}

Lemmas~\ref{ldircolim} and~\ref{dctopres} and the remarks preceding Theorem~\ref{maecsacc} imply the first part of the theorem: any mAEC $\ck$ is $\lambda$-accessible with directed colimits for any uncountable regular cardinal $\lambda>\lsdk$. To complete the proof of the moreover clause, we need:

\begin{lemma}\label{prestodc} Let $\ck$ be an mAEC.  If $M\in\ck$ is $\lambda$-presentable, $\dc(M)<\lambda$.\end{lemma}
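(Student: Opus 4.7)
The plan is to deduce $\dc(M)<\lambda$ from $\lambda$-presentability by writing $M$ as a $\lambda$-directed colimit of $\ksst$-substructures of density character less than $\lambda$ and running a splitting argument on $\id_M$. The argument is relevant in the range $\lambda>\lsdk$ in which Lemma~\ref{ldircolim} applies; this is already the range required by the moreover clause of Theorem~\ref{maecsacc}, so smaller uncountable regular $\lambda$ need not be treated separately.

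Assume $\lambda>\lsdk$ is uncountable regular and that $M$ is $\lambda$-presentable. By Lemma~\ref{ldircolim}, $M$ is the $\lambda$-directed colimit of the system $\langle M_i\mid i\in I\rangle$ of its $\ksst$-substructures with $\dc(M_i)<\lambda$, with cocone maps $\phi_i:M_i\to M$. Applying $\lambda$-presentability to $\id_M:M\to M$, the identity factors essentially uniquely through the colimit: there exist $i\in I$ and a map $f:M\to M_i$ with $\phi_i\circ f=\id_M$. In particular $\phi_i$ is a split epimorphism.

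The crux is to upgrade $\phi_i$ to an isomorphism. Each $\phi_i$ is a $\ck$-embedding, hence a sortwise isometry on underlying spaces and, via the faithful functor $U:\ck\to\Set$, a monomorphism in $\ck$. From
\[\phi_i\circ(f\circ\phi_i)=(\phi_i\circ f)\circ\phi_i=\phi_i=\phi_i\circ\id_{M_i}\]
we deduce $f\circ\phi_i=\id_{M_i}$, so $\phi_i$ is invertible with inverse $f$. Therefore $M\cong M_i$ in $\ck$, and since isomorphisms preserve density character, $\dc(M)=\dc(M_i)<\lambda$. I do not foresee any serious obstacle; the only point requiring care is the split-epi-to-iso upgrade, which depends essentially on the fact that $\ck$-embeddings are monomorphisms—exactly the structural input that allows the category-theoretic size to coincide with density character.
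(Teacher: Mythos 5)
Your proof is correct and follows essentially the same route as the paper: express $M$ as the $\lambda$-directed colimit of its $\ksst$-substructures of density character $<\lambda$ via Lemma~\ref{ldircolim}, factor $\id_M$ through some $M_i$ by $\lambda$-presentability, and use the fact that $\ck$-embeddings are monomorphisms to conclude $M\cong M_i$. The paper phrases the last step via concreteness of the inclusion ($M=N_i$ as sets) rather than the split-epi-plus-mono argument you give, but the two are interchangeable.
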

\begin{proof} Let $M\in\ck$ be $\lambda$-presentable.  Consider the identity map on $M$.  By Lemma~\ref{ldircolim}, we can express $M$ as the $\lambda$-directed colimit of its system of $\ck$-substructures of density character less than $\lambda$, $\langle N_i\ksst N_j\,|\,i\leq j\in I\rangle$.  Since $M$ is $\lambda$-presentable, the identity map factors through some $N_i$, or rather through the $\ck$-inclusion of $N_i$ into $M$ itself.  But, given that all $\ck$-embeddings are concrete monomorphisms, $M=N_i$, and we are done.
\end{proof}

An immediate consequence is that, for an uncountable regular cardinal $\lambda$, an object $M$ in an mAEC $\ck$ is $\lambda$-presentable if and only if 
$\dc(M)<\lambda$. The moreover clause of Theorem~\ref{maecsacc} follows: if $M\in\ck$ has $\dc(M)=\lambda >\aleph_0$ then $M$ is 
$\lambda^+$-presentable and $M$ cannot be $\mu$-presentable for $\mu\leq\lambda$---if so, it would would need to satisfy $\dc(M)<\mu$. On the other hand, let $M\in\ck$
have presentability rank $\lambda^+$ with $\lambda$ uncountable. Then $\dc(M)\leq\lambda$.  If $\dc(M)<\lambda$ then $\dc(M)<\mu\leq\lambda$ for some uncountable regular $\mu$.  This means that $M$ is $\mu$-presentable, contradicting the assumption that $M$ has presentability rank $\lambda^+$. This completes the proof of Theorem~\ref{maecsacc}.  
 
\begin{exam}\label{metric}
{
\em
Let $\Met$ be the category of complete metric spaces and isometric embeddings. Then $\Met=\mStr(L)$ where $L$ has one sort $S$ and a single function symbol $d$ for the metric. Since $LS^d(\Met)=\aleph_0$, $\Met$ is $\lambda$-accessible for any uncountable regular cardinal $\lambda$.
Complete metric spaces of cardinality $<\aleph_0$ have presentability rank $\aleph_1$, thus size $\aleph_0$.
Otherwise, size coincides with density character.

The same is true for the category $\Met_{\cs}$ of $\cs$-sorted complete metric spaces (where the cardinality of an $\cs$-sorted space is the sum of the cardinalities of its sorts). The functor $V:\Met_\cs\to\Met$ sends an $\cs$-sorted metric space to the disjoint union
of its sorts. Clearly, $V$ preserves $\aleph_1$-directed colimits and sizes for any uncountable cardinal $\lambda$.

The forgetful functor $U_0:\Met\to\Set$ sends a complete metric space of density character $\lambda$ to a set
of cardinality $\leq\lambda^{\aleph_0}$. In particular, $U_0$ preserves all sizes $\lambda$ with $\lambda^{\aleph_0}=\lambda$.
}
\end{exam}

We now complete the category-theoretic description of mAECs, incorporating the underlying set functor $U:\ck\to\Set$.  Monomorphisms are clearly concrete, so in light of Remark~\ref{ldircolimsconcr} and Theorem~\ref{maecsacc}, we have:

\begin{theo}\label{maecsasconcr} For any mAEC $\ck$, $(\ck,U)$ is a coherent $\lsdk^+$-accessible category with directed colimits, concrete $\aleph_1$-directed colimits, and concrete monomorphisms. Moreover, it is iso-full in the sense of Remark~3.5 in \cite{LR}.

Finally, as in Example~\ref{metric}, $U$ preserves sizes $\lambda$ with $\lambda^{\aleph_0}=\lambda$.
\end{theo}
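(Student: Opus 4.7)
The plan is to assemble the theorem from pieces already in place. For the first clause, Theorem~\ref{maecsacc} immediately gives $\lsdk^+$-accessibility with directed colimits, since $\lsdk^+$ is uncountable regular and strictly above $\lsdk$. The existence of concrete $\aleph_1$-directed colimits is precisely Remark~\ref{ldircolimsconcr}, and concreteness of monomorphisms is immediate: every morphism of $\ck$ is a sortwise isometric embedding, hence injective after applying $U$.

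For coherence of $(\ck,U)$, I would translate the mAEC coherence axiom as follows. Suppose $f: M_1 \to M_3$ and $g: M_2 \to M_3$ are $\ck$-embeddings and $h: UM_1 \to UM_2$ is a function with $Ug \circ h = Uf$. Because $g$ and $f$ are sortwise isometric and preserve all function and relation symbols of $L$, the factor $h$ is forced to be a metric substructure inclusion $M_1 \lsst M_2$ after identifying $M_1, M_2$ with their images in $M_3$; the coherence axiom then yields $M_1 \ksst M_2$, and $h$ lifts to a $\ck$-morphism. Iso-fullness in the sense of Remark~3.5 in~\cite{LR} follows from closure of $\ck$ and $\ksst$ under isomorphism in $\mStr(L)$ combined with the fact that the metrics are part of $L$: any bijection $UM \to UN$ which lifts to an $\mStr(L)$-isomorphism of $\ck$-objects is automatically a $\ck$-isomorphism.

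For the final clause, recall from Theorem~\ref{maecsacc} that size $\lambda$ in $\ck$ corresponds to $\dc(M)=\lambda$. Given $M \in \ck$ with $\dc(M) = \lambda$, fix a dense $D \subseteq UM$ of cardinality $\lambda$; every point of $UM$ is the limit of a Cauchy sequence from $D$, so $|UM| \leq \lambda^{\aleph_0}$, which equals $\lambda$ under the hypothesis. The reverse inequality $\lambda = \dc(M) \leq |UM|$ is trivial, so $|UM|=\lambda$; and since an infinite set of cardinality $\lambda$ has internal size $\lambda$ in $\Set$, $U$ preserves the size. The main obstacle here is not technical but definitional: unpacking exactly what coherence and iso-fullness mean in the concrete-category framework of~\cite{LR}, after which each condition reduces directly either to an mAEC axiom or to the cardinal-arithmetic bookkeeping already used in Example~\ref{metric}.
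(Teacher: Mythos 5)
Your proposal is correct and follows essentially the same route as the paper, which simply assembles Theorem~\ref{maecsacc}, Remark~\ref{ldircolimsconcr}, the observation that $\ck$-embeddings are concrete monomorphisms, and the cardinality bound $|UM|\leq\dc(M)^{\aleph_0}$ from Example~\ref{metric}. Your explicit unpacking of coherence (reducing the lifting condition on $U$ to the mAEC coherence axiom via images in $M_3$) and of iso-fullness (via closure of $\ck$ and $\ksst$ under isomorphism) supplies detail the paper leaves implicit, but it is exactly the intended argument.
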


In fact, as we will see in Corollary~\ref{maecprespres}, an analysis of the functor $U$---particularly the fact that it preserves $\aleph_1$-directed colimits---ensures that it preserves a broader class of sizes, namely sufficiently large $\lambda$ with $\lambda^+\trianglerighteq\aleph_1$.

\section{$\kappa$-concrete AECs}\label{kaec}

We now introduce a category-theoretic framework, $\kappa$-concrete AECs, which generalize both AECs and mAECs, and, more broadly, any AEC-like classes where only sufficiently highly directed colimits are required to be concrete.  Moreover, we recall several notions from the broader theory of accessible categories that become indispensable in this context.  Chiefly, we recall the definition of the sharp inequality relation, $\trianglelefteq$.

\begin{defi}\label{klaecdef}{\em We say that a pair $(\ck,U)$ consisting of a category $\ck$ and faithful functor $U:\ck\to\Set$ is a \textit{$\kappa$-concrete AEC}, or \textit{$\kappa$-CAEC}, if
\begin{enumerate}\item $\ck$ is accessible with directed colimits, and all of its morphisms are monomorphisms.\\
\item $(\ck,U)$ is coherent, and has concrete monomorphims.\\
\item $(\ck,U)$ is replete and iso-full, in the sense of \cite{LR}~3.5.\\
\item $U$ preserves $\kappa$-directed colimits.\end{enumerate}}\end{defi}

Note that the only modification from the category-theoretic characterization of AECs in \cite{LR} comes in item (4), where we specify that only $\kappa$-directed colimits need be concrete.

\begin{rem}{\em While it will generally suffice to speak of $\kappa$-CAECs, we will occasionally need another parameter: when the underlying category $\ck$ of a $\kappa$-{\rm CAEC} is $\lambda$-accessible, we specify that $(\ck,U)$ is a {\it $(\kappa,\lambda)$-concrete AEC}, or \textit{$(\kappa,\lambda)$-{CAEC}.}}\end{rem}

In light of Theorem~\ref{maecsasconcr},

\begin{propo}Any mAEC $\ck$, equipped with its underlying set functor $U$, is an $\aleph_1$-{\rm CAEC}.  In particular, it is an $(\aleph_1,\lsdk^+)$-{\rm CAEC}.\end{propo}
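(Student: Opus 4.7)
The plan is to verify the four conditions of Definition~\ref{klaecdef} for $\kappa=\aleph_1$ by reading them off from results already established for mAECs, principally Theorem~\ref{maecsasconcr}. Since Theorem~\ref{maecsacc} says that any mAEC is $\lsdk^+$-accessible with directed colimits, and since every $\ck$-embedding is (sortwise) an isometric embedding and in particular an injection between underlying sets, every morphism of $\ck$ is a monomorphism; this handles condition (1) and simultaneously establishes the accessibility rank needed for the ``in particular'' clause about $(\aleph_1,\lsdk^+)$-CAECs.

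Next, conditions (2) and (3) are essentially packaged in Theorem~\ref{maecsasconcr}: coherence of $(\ck,U)$, concreteness of monomorphisms, and iso-fullness in the sense of Remark~3.5 of \cite{LR} are all asserted there. Repleteness is not explicitly mentioned but is an immediate consequence of the first mAEC axiom, namely that $\ck$ and $\ksst$ are closed under isomorphism of metric $L$-structures.

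Finally, condition (4), that $U$ preserve $\aleph_1$-directed colimits, is precisely the content of Remark~\ref{ldircolimsconcr}: for any mAEC $\ck$ the underlying set functor has concrete $\lambda$-directed colimits for every uncountable regular $\lambda$, in particular for $\lambda=\aleph_1$. Combining these points shows that $(\ck,U)$ is an $\aleph_1$-CAEC, and together with the $\lsdk^+$-accessibility noted above yields the refined label of $(\aleph_1,\lsdk^+)$-CAEC.

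No real obstacle arises; the proposition is in effect a bookkeeping step that packages the content of Sections~\ref{maecs} and~\ref{maecsareacc} in the language of Definition~\ref{klaecdef}, and should be written as a one- or two-sentence proof that simply cites Theorem~\ref{maecsasconcr} together with Remark~\ref{ldircolimsconcr} and the isomorphism-closure axiom of mAECs.
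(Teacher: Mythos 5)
Your proposal is correct and matches the paper's approach exactly: the paper states this proposition with no separate proof beyond the phrase ``In light of Theorem~\ref{maecsasconcr},'' treating it as precisely the bookkeeping step you describe. Your slightly more explicit unpacking (monomorphisms from injectivity of embeddings, repleteness from isomorphism-closure, condition (4) from Remark~\ref{ldircolimsconcr}) is a faithful elaboration of what the paper leaves implicit.
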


\begin{rem}{\em We note that, for the purposes of this paper, we will have no need of repleteness or iso-fullness.  That is, we work in what one might call \textit{weak $\kappa$-CAECs}, which satisfy all of the conditions of Definition~\ref{klaecdef} except (3).  We note that the coherent accessible categories with concrete directed colimits considered in \cite{LR} are precisely the weak $\aleph_0$-CAECs.}\end{rem}

\begin{exams}\label{kaecexams}
{\em
(1) {\it Locally presentable categories with well-behaved regular monos:} Recall that a category is {\it locally presentable} just in case it is accessible and has all colimits.  Recall as well that we call a monomorphism {\it regular} if it is the equalizer of a pair of morphisms: note that in many cases, including $\Set$ (or any pretopos) and $\Ab$ (or any Abelian category), all monomorphisms are regular.  

Let $\ck$ be a locally presentable category in which regular monomorphisms are well-behaved, in the sense that
\begin{enumerate}
\item[(i)] If $gf$ and $g$ are regular monomorphisms then $f$ is a regular monomorphisms.
\item[(ii)] Regular monomorphisms commute with directed colimits. 
\end{enumerate}
The second condition means that given directed colimits $(a_i:A_i\to A)_{i\in I}$ and $(b_i:B_i\to B)_{i\in I}$ in $\ck$ and regular monomorphisms $f_i:A_i\to B_i$ for each $i\in I$, the map $\colim f_i:\colim A_i\to\colim B_i$ is itself a regular monomorphism. Under these assumptions, the subcategory $\ck_{reg}$ having the same objects as $\ck$ but regular monomorphisms as morphisms is closed under directed colimits in $\ck$. Following \cite{AR} Proposition 2.31 and Theorem 2.34, $\ck_{reg}$ is an accessible category. By \cite{AR} Theorem 5.30, moreover, there is a faithful functor $U:\ck\to\Set$ preserving regular monomorphisms and $\kappa$-directed colimits 
for some $\kappa$. This functor makes $\ck_{reg}$ a weak $\kappa$-CAEC.

(2) {\it Grothendieck topoi, Grothendieck categories:} Let $\ck$ be a locally presentable category with well-behaved regular monomorphisms and let $\cl$ be its full reflective subcategory closed under
$\kappa$-directed colimits for some $\kappa$ and such that the reflector $\cl\to\ck$ preserves regular monomorphisms. Then $\cl$ is itself a locally
presentable category with well-behaved regular monomorphisms. This covers two very broad classes of examples: recall that Grothendieck topoi and Grotendieck categories arise as such subcategories of, respectively, categories of presheaves $\Set^{\cc^{\op}}$ and categories of $R$-modules: the latter categories are locally presentable with well-behaved regular monomorphisms---as noted above, all of their monomorphisms are in fact regular.
 
(3) {\it Metric spaces with contractions:} We show that $\Met$, the category of complete metric spaces and contractions, is an $\aleph_1$-CAEC.  

Consider the single-sorted signature with binary relation symbols $R_r$ for each $0\leq r\in\reals$. Let $T$ consist of axioms
$$
(\forall x,y)(R_0(x,y)\leftrightarrow x=y)
$$
$$
(\forall x,y)(R_r(x,y)\rightarrow R_r(y,x))
$$
for all $r\leq s$
$$
(\forall x,y)(R_r(x,y)\rightarrow R_s(x,y))
$$
for all $r,s$
$$
(\forall x,y,z)(R_r(x,z)\wedge R_s(z,y)\rightarrow R_{r+s}(x,y))
$$
for $r_0\geq r_1\geq\dots r_n\geq\dots$ with $r=\lim r_n$
$$
(\forall x,y) (\bigwedge_n R_{r_n}(x,y)\rightarrow R_r(x,y))
$$
Since $T$ is a universal Horn theory in $L_{\omega_1,\omega}$, the category $\Mod(T)$ of $T$-models and homomorphisms is locally $\aleph_1$-presentable (see \cite{AR} 5.30). Regular monomorphisms are submodel embeddings and are well-behaved. If we interpret $R_r(a,b)$ as $d(a,b)\leq r$, $\Mod(T)$ is isomorphic to the category of generalized metric spaces
and contractions. Recall that, in a generalized metric space, the metric $d$ takes values in 
$\reals^{\ast}=\reals\cup\{\infty\}$. A contraction is a mapping $f$ such that $d(a,b)\geq d(fa,fb)$. Regular monomorphisms are isometries and are
well-behaved. The complete generalized metric spaces form a reflective full subcategory closed under $\aleph_1$-directed colimits and the reflector preserves isometries. Thus complete generalized metric spaces with isometries form a weak CAEC: in fact, a weak $\aleph_1$-CAEC. Since isomorphisms are isometries and isometries are preserved by the reflection, moreover, they form an $\aleph_1$-CAEC. By restriction, we obtain $\Met$ as an $\aleph_1$-CAEC. 

Notice that monomorphisms in $\Mod(T)$ are injective contractions, but they are not preserved by the reflection to complete generalized metric spaces.

(4) {\it Metric structures:} The procedure above works for any category of metric $L$-structures of the sort described in Definition~\ref{mlstruct}, assuming that the interpretations of the function and predicate symbols are not merely continuous, but contractions. Although this assumption removes the difficulty involving the nonexistence of certain directed colimits, highlighted in \ref{badstructs}, it is too restrictive: in Banach spaces, for example, the addition operation is continuous, but not a contraction.

(5) An ultrametric space is a special kind of a metric space in which the triangle inequality is replaced by
$$
d(x,y)\leq\max\{d(x,z),d(z,y)\}.
$$
In a generalized ultrametric space, the metric takes values in $\reals^\ast$. Morphisms of generalized ultrametric spaces are contractions.
We may axiomatize a generalized ultrametric space as in example (3) above, formalizing the ultrametric inequality as follows:
$$
(\forall x,y,z)(R_r(x,z)\wedge R_s(z,y)\rightarrow R_{\max\{r,s\}}(x,y))
$$
Thus the category of generalized ultrametric spaces is locally $\aleph_1$-presentable. The same is valid for its full subcategory $\ck$ consisting
of generalized ultrametric spaces where the metric takes values in $\{0,1,\dots,n,\dots,\infty\}$. This category $\ck$ coincides with the category
of $(\omega+1)^{\op}$-ultrametric spaces in the sense of \cite{A} and \cite{PR}, where $(\omega+1)^{\op}$ is the dual of the chain of ordinals $0,1,\dots,n,\dots,\omega$.
The category $\ck$ is a weak $\aleph_1$-CAEC. Analogously, the category $(\omega_1+1)^{\op}$-ultrametric spaces is a weak $\aleph_2$-CAEC and, moreover, it is not a weak $\aleph_1$-CAEC: colimits of $\omega_1$-chains fail to be concrete by an argument analogous to that for the failure of concreteness of colimits of $\omega$-chains of complete metric spaces. Similarly, for any regular cardinal $\aleph_\alpha$, the $(\omega_\alpha+1)^{\op}$-ultrametric spaces form a weak $\aleph_\alpha$-CAEC
which is not a weak $\aleph_\beta$-CAEC for any regular $\aleph_\beta<\aleph_\alpha$.

(6) {\it Grothendieck quasitopoi:} Similar to (1) above, we may consider locally presentable categories $\ck$ with well-behaved monomorphisms. In fact, the condition (i) is automatic, so we need only assume that monomorphisms commute with directed colimits.  Then $\ck_{mono}$, which has the same objects as $\ck$ and monomorphisms as morphisms, is a weak CAEC. Following \cite{GL}, any Grothendieck quasitopos
is a locally presentable category with well-behaved monomorphisms.
}
\end{exams}

In case $U$ preserves directed colimits, as in \cite{LR}, it also preserves $\lambda$-presentable objects for sufficiently large $\lambda$, i.e. $\lambda>\lambda_U$ for some $\lambda_U$ (see \cite{BR}~4.3).  Provided $(\ck,U)$ is coherent, this guarantees that $U$ in fact preserves sizes $\lambda>\lambda_U$, and not merely presentability.  Both statements fail if $U$ does not preserve directed colimits:

\begin{exam}{\em An object $M$ of $\Met$ is $\aleph_1$-presentable in $\ck$ if and only if it is separable, whereas $U(M)$ would be $\aleph_1$-presentable in $\Set$ if and only if it is countable.  Naturally, there are separable complete metric spaces that are not countable.}\end{exam}

We do, however, get a slightly weaker preservation result, which is a modification of \cite{BR}~4.3:

\begin{theo}\label{presentpreserv} Let $\ck$ be a $\lambda$-accessible category with $\kappa$-directed colimits, and let $F:\ck\to\cl$ be a faithful functor preserving $\kappa$-directed colimits and $\lambda$-presentable objects. Then $F$ preserves $\mu$-presentable objects for all $\mu$ with 
$\mu\triangleright\kappa$ and $\mu\geq\lambda$.\end{theo}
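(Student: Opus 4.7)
The plan is to adapt the argument of \cite{BR}~4.3 to our weaker hypothesis that $F$ preserves $\kappa$-directed (rather than all directed) colimits. The strategy proceeds in three steps: first, represent an arbitrary $\mu$-presentable $M\in\ck$ as a $\mu$-small $\kappa$-directed colimit of $\lambda$-presentable objects; second, apply $F$ to transport this representation to $\cl$; third, invoke a standard closure result to conclude $\mu$-presentability of $F(M)$.

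For the first step I would appeal to the machinery developed in \cite{MP} and \cite{AR} for passing between accessibility ranks: since $\ck$ is $\lambda$-accessible and has $\kappa$-directed colimits, the hypotheses $\mu\triangleright\kappa$ and $\mu\geq\lambda$ ensure that $\ck$ is in fact $\mu$-accessible, and moreover that each $\mu$-presentable $M$ admits a representation $M=\colim_{i\in I}M_i$ in which $I$ is a $\kappa$-directed poset with $|I|<\mu$ and each $M_i$ is $\lambda$-presentable. The sharp inequality $\mu\triangleright\kappa$ is precisely what is needed to bound the cardinality of such a $\kappa$-directed indexing poset by $\mu$.

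The remaining steps are then routine. Applying $F$: since $F$ preserves $\kappa$-directed colimits we have $F(M)=\colim_{i\in I}F(M_i)$, and since $F$ preserves $\lambda$-presentable objects and $\mu\geq\lambda$, each $F(M_i)$ is $\mu$-presentable. To conclude, one uses the standard fact that a $\mu$-small colimit of $\mu$-presentable objects is $\mu$-presentable (\cite{AR}), which applied to $F(M)=\colim_{i\in I}F(M_i)$ with $|I|<\mu$ yields the desired conclusion.

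The main obstacle lies in the first step. The classical representation theorem for objects in a $\lambda$-accessible category ordinarily requires the sharp relation $\mu\triangleright\lambda$, whereas we are given only $\mu\triangleright\kappa$; one must therefore carefully invoke the strengthened form of the representation that becomes available precisely because $\ck$ has all $\kappa$-directed colimits, not merely $\lambda$-directed ones. The faithfulness of $F$ does not appear to play a role in this argument and is likely a residual hypothesis retained for uniformity with \cite{BR}~4.3.
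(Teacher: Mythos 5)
Your overall strategy is the same as the paper's: decompose a $\mu$-presentable object in terms of $\mu$-small $\kappa$-directed colimits of $\lambda$-presentable objects, push the decomposition through $F$, and use closure of $\mu$-presentable objects under $\mu$-small colimits. However, your first step, as stated, claims too much. In a general $\lambda$-accessible category one cannot represent an arbitrary $\mu$-presentable object $M$ \emph{as} a $\mu$-small $\kappa$-directed colimit of $\lambda$-presentable objects; the correct statement is only that $M$ is a \emph{retract} of such a colimit. The reason is that the canonical construction (take a $\lambda$-directed colimit presentation $(a_i:A_i\to M)_{i\in I}$ with $A_i$ $\lambda$-presentable, pass to the $\mu$-directed poset $\hat I$ of $\kappa$-directed subsets of $I$ of size less than $\mu$ --- this is where $\mu\triangleright\kappa$ enters, via condition (4) of the characterization of the sharp relation --- and form the colimits $B_N$ of the corresponding subdiagrams) exhibits $M$ as a $\mu$-directed colimit of the objects $B_N$; $\mu$-presentability of $M$ then makes the identity of $M$ factor through some $B_N$, which yields a retraction, not an isomorphism. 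Since the category is merely accessible and need not have the colimits required to split idempotents inside the diagram, you cannot in general upgrade the retract to an equality, so the representation you invoke simply is not available.

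The gap is easily repaired, and the repaired argument is exactly the paper's proof: $M$ is a retract of some $B_N$, where $B_N$ is a $\mu$-small $\kappa$-directed colimit of $\lambda$-presentable objects; $F(B_N)$ is then a $\mu$-small $\kappa$-directed colimit of $\lambda$-presentable (hence $\mu$-presentable) objects of $\cl$, so $F(B_N)$ is $\mu$-presentable; and since every functor preserves retracts and a retract of a $\mu$-presentable object is $\mu$-presentable, $F(M)$ is $\mu$-presentable. Your remaining steps (preservation of the $\kappa$-directed colimit by $F$, preservation of $\lambda$-presentables, and closure of $\mu$-presentables under $\mu$-small colimits) are correct, and your observation that faithfulness of $F$ plays no role in the argument is also accurate.
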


Before we proceed with the proof of Theorem~\ref{presentpreserv}, we recall the definition of the order relation $\trianglerighteq$, which first appeared in \cite{MP}.  It arises in response to a very natural question, namely: when is a $\lambda$-accessible category $\mu$-accessible, for $\mu>\lambda$?  The following result appears in \cite{MP} 2.3 (see also Theorem 2.11 in \cite{AR}):

\begin{theo}\label{slessequivthm} 
For regular cardinals $\lambda<\mu$, the following are equivalent:
\begin{enumerate}
\item Each $\lambda$-accessible category is $\mu$-accessible.
\item The category of $\lambda$-directed posets with order embeddings (which is $\lambda$-accessible) is $\mu$-accessible.
\item For each set $X$ of less than $\mu$ elements the poset of subsets of size less than $\lambda$, $P_{<\lambda}(X)$, has a cofinal set of cardinality less than $\mu$.
\item In each $\lambda$-directed poset, every subset of less than $\mu$ elements is contained in a $\lambda$-directed subset of less than $\mu$ elements.
\end{enumerate}\end{theo}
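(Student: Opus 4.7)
The plan is to establish the cyclic chain of implications $(1) \Rightarrow (2) \Rightarrow (3) \Rightarrow (4) \Rightarrow (1)$. The four conditions split naturally into two halves: (1) and (2) are category-theoretic statements about accessibility, while (3) and (4) are purely set-theoretic statements about $\lambda$-directed posets. The bridge between the halves is the observation that the category of $\lambda$-directed posets with order embeddings serves as a ``universal'' $\lambda$-accessible category, so its $\mu$-accessibility captures the sharp relation $\mu \triangleright \lambda$ precisely.

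For $(1) \Rightarrow (2)$, I would first verify that the category of $\lambda$-directed posets and order embeddings is itself $\lambda$-accessible, its $\lambda$-presentable objects being the $\lambda$-directed posets of cardinality less than $\lambda$; this follows via a standard sketch-theoretic presentation. For $(2) \Rightarrow (3)$, given $X$ with $|X| < \mu$, the poset $P_{<\lambda}(X)$ is $\lambda$-directed, hence by $\mu$-accessibility is a $\mu$-directed colimit of $\mu$-presentable $\lambda$-directed posets, i.e.\ of $\lambda$-directed posets of cardinality less than $\mu$. Because the diagram morphisms are order embeddings, $P_{<\lambda}(X)$ is the union of the images of these subposets, and by $\mu$-directedness of the diagram some single subposet in it contains all singletons $\{x\}$ with $x \in X$; such a subposet is of cardinality less than $\mu$ and automatically cofinal in $P_{<\lambda}(X)$.

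For $(3) \Rightarrow (4)$, given a $\lambda$-directed poset $P$ and a subset $S \subseteq P$ with $|S| < \mu$, I would perform a transfinite closure: set $S_0 = S$, and at each successor stage, for every element of a cofinal subset of $P_{<\lambda}(S_\alpha)$ of cardinality less than $\mu$ (guaranteed by (3)), adjoin an upper bound in $P$; at limit stages take unions. After $\lambda$ stages, using regularity of $\mu$ for the counting bound at each stage, the result is a $\lambda$-directed subset of $P$ of cardinality less than $\mu$ containing $S$.

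The main obstacle is $(4) \Rightarrow (1)$. Given a $\lambda$-accessible $\ck$, every object $K$ is expressed as a $\lambda$-directed colimit $K = \colim_{i \in I} K_i$ of $\lambda$-presentables indexed by a $\lambda$-directed poset $I$. For each $\lambda$-directed subposet $J \subseteq I$ of cardinality less than $\mu$, I would form the sub-colimit $K_J = \colim_{i \in J} K_i$; by (4), every subset of $I$ of size less than $\mu$ lies in some such $J$, so these sub-colimits assemble into a $\mu$-directed diagram whose colimit is $K$. The fiddly point is verifying that each $K_J$ is genuinely $\mu$-presentable in $\ck$---this uses the standard fact that a $\lambda$-directed colimit of fewer than $\mu$ many $\lambda$-presentables is $\mu$-presentable---and producing, up to isomorphism, a small generating set of such $\mu$-presentables; both reduce to careful bookkeeping with the $\lambda$-directed subposets supplied by (4).
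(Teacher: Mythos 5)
The paper itself offers no proof of this theorem: it is quoted from \cite{MP}~2.3 (see also \cite{AR}~2.11), so there is nothing internal to compare against except Remark~\ref{stronger}, which reproduces the argument for $(4)\Rightarrow(1)$; your version of that step matches it (pass to the $\mu$-directed poset of $\lambda$-directed subdiagrams of size less than $\mu$, and use that a $\lambda$-directed colimit of fewer than $\mu$ many $\lambda$-presentables is $\mu$-presentable). Your cyclic architecture $(1)\Rightarrow(2)\Rightarrow(3)\Rightarrow(4)\Rightarrow(1)$ is the standard one, $(1)\Rightarrow(2)$ is immediate once the $\lambda$-accessibility of the category of $\lambda$-directed posets is granted (the theorem statement grants it parenthetically; the direct verification---every subset of size less than $\lambda$ sits inside a subposet with a top element adjoined---is cleaner than an appeal to sketches), and your $(3)\Rightarrow(4)$ closure argument over $\lambda$ stages, using regularity of $\mu$ for the cardinality bookkeeping, is correct.

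The genuine gap is in $(2)\Rightarrow(3)$, where you silently identify the $\mu$-presentable objects of the category of $\lambda$-directed posets and order embeddings with those of cardinality less than $\mu$. The direction you actually need is the hard one: the subposet $E_{i_0}$ containing all singletons is (isomorphic to) a $\mu$-presentable object, and unless you know $|E_{i_0}|<\mu$ its cofinality in $P_{<\lambda}(X)$ gives you nothing. This claim is true but not routine---indeed this paper is largely about the failure of ``presentability rank equals cardinality'' for concrete accessible categories (cf.\ the example of separable-but-uncountable complete metric spaces in Section~\ref{kaec})---and the obvious attempt to prove it is circular: to show that a $\lambda$-directed poset $D$ with $|D|\geq\mu$ is not $\mu$-presentable, one is tempted to write $D$ as a $\mu$-directed union of its $\lambda$-directed subposets of size less than $\mu$ and note that $\id_D$ cannot factor through a proper subobject; but the existence of enough such subposets is exactly condition (4), which you are in the middle of deriving from (3). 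A non-circular fix: adjoin a top element to form $D^\top$, write $D^\top$ as the $\mu$-directed union of the posets $S\cup\{\top\}$ for $S\in P_{<\mu}(D)$ (each is $\lambda$-directed since it has a maximum, and $P_{<\mu}(D)$ is $\mu$-directed by regularity of $\mu$), and observe that the embedding $D\hookrightarrow D^\top$ cannot factor through any $S\cup\{\top\}$ with $|S|<\mu$, since such a factorization forces $D\subseteq S$. With this lemma inserted, your cycle closes and the proof is correct.
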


\begin{defi}\label{defsless} 
{\em For regular cardinals $\lambda$ and $\mu$, we say that \textit{$\lambda$ is sharply less than $\mu$}, denoted $\lambda\triangleleft\mu$, if they satisfy the equivalent conditions of the theorem above.}
\end{defi}

To make this a bit more concrete:

\begin{exams}\label{sless}{\em \begin{enumerate}
\item $\omega\triangleleft\mu$ for every uncountable regular cardinal $\mu$.
\item  For every regular $\lambda$, $\lambda\triangleleft\lambda^+$.  
\item For any regular cardinals $\lambda$ and $\mu$ with $\lambda\leq\mu$, $\lambda\triangleleft(2^\mu)^+$.
\item Whenever $\mu$ and $\lambda$ are regular cardinals with $\beta^\alpha<\mu$ for all $\beta<\mu$ and $\alpha<\lambda$, then $\lambda\triangleleft\mu$.  
\end{enumerate}}\end{exams}

See 2.3 in \cite{MP} or 2.13 in \cite{AR} for more examples.  We note that for $\mu$ sufficiently large relative to $\lambda$, the relation $\lambda^+\triangleleft\mu^+$ is equivalent to the more fundamental equality $\mu^{\lambda}=\mu$ (we thank the anonymous referee for providing a proof of this fact in the case $\lambda=\aleph_0$).

\begin{propo}\label{sharpcard} Let $\mu^+>2^\lambda$.  Then $\mu^+\triangleright\lambda^+$ if and only if $\mu^\lambda=\mu$.\end{propo}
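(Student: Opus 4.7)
My plan is to reduce the equivalence to characterization~(3) of Theorem~\ref{slessequivthm}. After substituting $\lambda^+$ for $\lambda$ and $\mu^+$ for $\mu$ there, the condition reads: $\lambda^+\triangleleft\mu^+$ if and only if for every set $X$ with $|X|\leq\mu$, the poset $P_{\leq\lambda}(X)$ of subsets of $X$ of cardinality at most $\lambda$ admits a cofinal subset of size $\leq\mu$. The key observation that drives both directions is that the hypothesis $\mu^+>2^\lambda$ is equivalent to $2^\lambda\leq\mu$.

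For the sufficiency direction, I would assume $\mu^\lambda=\mu$ and simply note that for any such $X$ one has $|P_{\leq\lambda}(X)|\leq|X|^\lambda\leq\mu^\lambda=\mu$, so $P_{\leq\lambda}(X)$ is cofinal in itself and the condition above is satisfied. For the necessity direction, I would fix $X$ with $|X|=\mu$ together with a cofinal $\cc\subseteq P_{\leq\lambda}(X)$ of size $\leq\mu$. Since every element of $P_{\leq\lambda}(X)$ is contained in some $C\in\cc$ of cardinality $\leq\lambda$, a crude estimate yields $|P_{\leq\lambda}(X)|\leq|\cc|\cdot 2^\lambda\leq\mu\cdot 2^\lambda=\mu$, where the last equality invokes $2^\lambda\leq\mu$. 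Classifying the functions $f:\lambda\to X$ by their images---each image lying in $P_{\leq\lambda}(X)$ and carrying at most $2^\lambda$ functions---then gives $\mu^\lambda\leq|P_{\leq\lambda}(X)|\cdot 2^\lambda\leq\mu\cdot 2^\lambda=\mu$, and the trivial bound $\mu^\lambda\geq\mu$ closes the loop.

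I do not expect any substantive difficulty beyond the careful bookkeeping of the hypothesis $\mu^+>2^\lambda$, which is exactly what collapses the two factors of $2^\lambda$ appearing in the necessity argument down to $\mu$; without it, the estimates would only yield the useless bound $\mu^\lambda\leq 2^\lambda$. Otherwise, the proof is routine cardinal arithmetic on top of the combinatorial reformulation provided by Theorem~\ref{slessequivthm}(3).
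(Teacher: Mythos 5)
Your proof is correct and follows essentially the same route as the paper: both directions reduce to the cofinality characterization of $\triangleleft$ from Theorem~\ref{slessequivthm}(3), and your counting bound $\mu^\lambda\leq|\cc|\cdot 2^\lambda\cdot 2^\lambda=\mu$ is just the direct (contrapositive-free) form of the paper's pigeonhole argument, which likewise hinges on a $\lambda$-sized set having at most $2^\lambda$ subsets and on $2^\lambda\leq\mu$. The only cosmetic difference is that the paper dispatches the ``if'' direction by citing Examples~\ref{sless}(4) rather than verifying condition (3) directly as you do.
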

\begin{proof}
The ``if'' direction follows immediately from \ref{sless}(4), and holds even in case $\mu^+\leq 2^\lambda$.  Going the other way, let $\mu$ be such that $\mu^\lambda>\mu$, and suppose that $\mu^+\triangleright\lambda^+$.  Then $P_{\leq\lambda}(\mu)$ contains a cofinal subset of size $\mu$, say $\{X_\alpha\,|\,\alpha<\mu\}$.  As $\mu^\lambda>\mu$, by assumption, we may choose a family of $\mu^+$ distinct elements of $P_{\leq\lambda}(\mu)$, $\{Y_\beta\,|\,\beta<\mu^+\}$.  Let $f:\mu^+\to\mu$ satisfy $Y_\beta\subseteq X_{f(\beta)}$.  By the pigeonhole principle, $\mu^+$ of the $Y_\beta$ must belong to the same $X_\alpha$; that is, some $X_\alpha$ must contain $\mu^+$ distinct subsets of size at most $\lambda$.  But $X_\alpha$, being of size at most $\lambda$, can contain at most $\lambda^\lambda=2^\lambda$ such subsets: by assumption, $\mu^+>2^\lambda$.
\end{proof}

\begin{rem}\label{GCHsharp}
{
\em
Under GCH, these relations correspond exactly: $\mu^+\triangleright\lambda^+$ if and only if $\mu^\lambda=\mu$.  Otherwise, $\mu^+\triangleright\lambda^+$ will hold on an initial segment of cardinals $\mu^+\leq 2^\lambda$, namely $\mu=\lambda^+$ and all of its successors below $2^\lambda$.
}
\end{rem}

\begin{rem}\label{stronger} {\em Let $\ck$ be a $\lambda$-accessible category with $\kappa$-directed colimits, $\mu\triangleright\kappa$ and $\mu\geq\lambda$. Analogously
to \cite{BR}~4.1, we show that $\ck$ is $\mu$-accessible.

Given an object $K$ of $\ck$, there is a $\lambda$-directed colimit $(a_i:A_i\to K)_{i\in I}$ of $\lambda$-pre\-sen\-table objects $A_i$. 
Let $\hat{I}$ be the poset of all $\kappa$-directed subsets of $I$ of cardinalities less than $\mu$ (ordered by inclusion). Since every subset of $I$ 
having less than $\mu$ elements is contained in a $\kappa$-directed subset of $I$ having less than $\mu$ elements (cf. \ref{slessequivthm}), clearly, 
$\hat{I}$ is $\mu$-directed.  For each $M\in\hat{I}$, let $B_M$ be a colimit of the subdiagram indexed by $M$. Then $B_M$ is $\mu$-presentable.  $K$ is 
a $\mu$-directed colimit of the $B_M$, $M\in\hat{I}$.  Thus $\ck$ is $\mu$-accessible.}\end{rem}

\begin{proof}(Theorem~\ref{presentpreserv}) Let $K$ be $\mu$-presentable object of $\ck$. 
Following \ref{stronger}, $K$ is a $\mu$-directed colimit of objects $B_M$ where each $B_M$ is a $\mu$-small $\kappa$-directed colimit of $\lambda$-presentable objects. Since $K$ is $\mu$-presentable, it is a retract of some $B_M$.  Since $F(B_M)$ is a $\mu$-small $\kappa$-directed colimit of $\lambda$-presentable objects, $F(B_M)$ is $\mu$-presentable in $\cl$. Thus $F(K)$ is $\mu$-presentable in $\cl$ as a retract of $F(B_M)$. We have proved that $F$ preserves 
$\mu$-presentable objects.
\end{proof}

In light of the theorem, it is important to establish that there are, in fact, cardinals $\lambda$ such that $U$ preserves $\lambda$-presentable objects.

\begin{rem} 
{
\em
If $\ck$ is a $(\kappa,\lambda)$-CAEC, then in particular $U:\ck\to\Set$ is an accessible functor and, by Theorem 2.19 in \cite{AR}, is 
$\theta$-accessible (that is, $\ck$ is $\theta$-accessible and $U$ preserves $\theta$-directed colimits) and preserves $\theta$-presentable objects for some cardinal $\theta$.
}\end{rem}

\begin{defi}Let $\ck$ be a $(\kappa,\lambda)$-CAEC.  We define $\lambda_U$ to be the least cardinal such that $U$ is $\lambda_U$-accessible and preserves $\lambda_U$-presentable objects.\end{defi}

\begin{rem}\label{pressize} 
{
\em 
This also applies in case $\ck$ is an mAEC, of course, and we get a very straightforward upper bound on $\lambda_U$, namely $(\lsdk^{\aleph_0})^+$. 
}
\end{rem}

As a matter of convention, we insist that $\lambda_U\geq\lambda$ in a general $(\kappa,\lambda)$-CAEC, hence we require $\lambda_U\geq\lsdk^+$ in an mAEC $\ck$.

\begin{coro}\label{klaecprespres}Given any $(\kappa,\lambda)$-{\rm CAEC} $(\ck,U)$, $U$ preserves $\mu$-presentable objects for all $\mu\triangleright\kappa$ 
with $\mu\geq\lambda_U$.\end{coro}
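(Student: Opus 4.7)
The plan is to obtain the corollary as a direct application of Theorem~\ref{presentpreserv} with $\lambda_U$ playing the role of the $\lambda$ appearing there. The only real content is verifying that every hypothesis of the theorem is met in the $(\kappa,\lambda)$-CAEC setting.

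First I would check the hypotheses one by one. Since $(\ck,U)$ is a $(\kappa,\lambda)$-CAEC, Definition~\ref{klaecdef}(1) guarantees that $\ck$ is accessible with all directed colimits (so in particular it has $\kappa$-directed colimits), Definition~\ref{klaecdef}(4) tells us that $U$ preserves $\kappa$-directed colimits, and $U$ is faithful by assumption. By the very definition of $\lambda_U$, the functor $U$ is $\lambda_U$-accessible---meaning that $\ck$ is $\lambda_U$-accessible and $U$ preserves $\lambda_U$-directed colimits---and $U$ preserves $\lambda_U$-presentable objects.

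With these observations in hand, the hypotheses of Theorem~\ref{presentpreserv} are satisfied with $F=U$ and with $\lambda_U$ in the role of $\lambda$: $\ck$ is $\lambda_U$-accessible with $\kappa$-directed colimits, and the faithful functor $U$ preserves both $\kappa$-directed colimits and $\lambda_U$-presentable objects. Theorem~\ref{presentpreserv} therefore yields that $U$ preserves $\mu$-presentable objects for every regular $\mu$ with $\mu\triangleright\kappa$ and $\mu\geq\lambda_U$, which is exactly the conclusion of the corollary.

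There is no genuine obstacle; the corollary is essentially a re-labelling of Theorem~\ref{presentpreserv} adapted to the $(\kappa,\lambda)$-CAEC framework. The one subtlety worth flagging is that one must invoke $\lambda_U$-accessibility of $\ck$ rather than $\lambda$-accessibility, since we do not in general have $\lambda\triangleleft\lambda_U$ and Theorem~\ref{presentpreserv} requires the accessibility rank to match the rank at which $F$ preserves presentable objects; but this compatibility is already built into the definition of $\lambda_U$.
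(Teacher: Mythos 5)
Your proposal is correct and is precisely the argument the paper intends: the corollary is stated without proof immediately after the definition of $\lambda_U$, because it follows by applying Theorem~\ref{presentpreserv} with $F=U$ and $\lambda_U$ in the role of $\lambda$, exactly as you do. Your verification of the hypotheses (in particular that $\lambda_U$-accessibility of $U$ supplies both the accessibility rank and the preservation of $\lambda_U$-presentables) is the right, and only, content needed.
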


We can easily rewrite this statement in terms of size, rather than presentability: Corollary~\ref{klaecprespres} asserts that the forgetful functor $U:\ck\to\Set$ preserves objects of size $\mu$ with $\mu^+\triangleright\kappa$ and $\mu^+\geq\lambda_U$.  In the special case of mAECs, this becomes:

\begin{coro}\label{maecprespres} If $\ck$ is an mAEC, its underlying set functor $U$ preserves sizes $\mu$ with $\mu^+\triangleright\aleph_1$ and $\lambda_U^+$.\end{coro}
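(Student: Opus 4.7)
The plan is to derive this as a direct specialization of Corollary~\ref{klaecprespres}, translating between presentability rank in $\ck$ and size via Theorem~\ref{maecsacc}, and between presentability in $\Set$ and cardinality via the standard fact that an infinite set is $\nu$-presentable precisely when it has cardinality less than $\nu$.

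First I would invoke the proposition immediately following Theorem~\ref{maecsasconcr}, which says that any mAEC $\ck$, equipped with its underlying set functor $U$, is an $\aleph_1$-CAEC. Applying Corollary~\ref{klaecprespres} with $\kappa = \aleph_1$ then yields that $U$ preserves $\nu$-presentable objects for all regular $\nu$ with $\nu \triangleright \aleph_1$ and $\nu \geq \lambda_U$.

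Next, let $M \in \ck$ be an object of size $\mu$. By the moreover clause of Theorem~\ref{maecsacc}, this is to say that $\dc(M) = \mu$ and that $M$ has presentability rank $\mu^+$ in $\ck$; in particular, $M$ is $\mu^+$-presentable. Setting $\nu = \mu^+$, and assuming $\mu^+ \triangleright \aleph_1$ and $\mu^+ \geq \lambda_U$ (which is the hypothesis of the statement, read in the form matching Corollary~\ref{klaecprespres}), we conclude that $U(M)$ is $\mu^+$-presentable in $\Set$, and hence $|U(M)| \leq \mu$.

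The reverse inequality $|U(M)| \geq \mu$ is immediate: the density character of a complete metric space is bounded above by the cardinality of its underlying set, and in the many-sorted case this extends by summing over sorts, so $\mu = \dc(M) \leq |U(M)|$. Therefore $|U(M)| = \mu$, so that $U(M)$ has size $\mu$ in $\Set$, as required. The only subtle point, and certainly not a serious obstacle, is the index shift between size $\mu$ and presentability rank $\mu^+$; once this substitution is made in Corollary~\ref{klaecprespres} and combined with the elementary bound $\dc(M)\leq |U(M)|$, the result follows at once.
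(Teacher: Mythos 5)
Your proof is correct and follows essentially the same route as the paper, which presents this corollary as the immediate specialization of Corollary~\ref{klaecprespres} to the $(\aleph_1,\lsdk^+)$-CAEC structure of an mAEC, rewritten from presentability to size. The only thing you add beyond the paper's ``easily rewrite'' remark is to make explicit the lower bound $\mu=\dc(M)\leq|U(M)|$ needed to upgrade ``$U(M)$ is $\mu^+$-presentable'' to ``$U(M)$ has size exactly $\mu$,'' which is a worthwhile clarification but not a different argument.
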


\begin{rem}\label{aleph1spec} 
{
\em
(1) Following Proposition~\ref{sharpcard}, the condition that $\mu^+\triangleright\aleph_1$ is equivalent to the more familiar condition $\mu^{\aleph_0}=\mu$ for $\mu^+>2^{\aleph_0}$.  Hence in the worst case scenario, namely $\lambda_U=(\lsdk^{\aleph_0})^+$, Corollary~\ref{maecprespres} guarantees preservation only in $\mu$ with $\mu^{\aleph_0}=\mu$.  Given a smaller $\lambda_U$, though, and barring the assumption of GCH, $\mu^+\triangleright\lambda^+$ may hold of certain small cardinals for which $\mu^{\aleph_0}>\mu$---see Remark~\ref{GCHsharp}.

(2) The spectrum of $\mu$ with $\mu^+\triangleright\aleph_1$ contains gaps, as one would expect.  In particular, $\aleph_\omega$ does not belong to this class.  Although a detailed argument is given \cite{AR} 2.18(8), the basic details are instructive: consider a set $X=\bigcup_{n<\omega} X_n$ with each $X_n$ of cardinality $\aleph_n$.  One can check that no subset of X of $P_{\aleph_1}(X)$ of cardinality less than $\aleph_{\omega+1}$ is cofinal in $P_{\aleph_1}(X)$. Hence $\aleph_{\omega+1}$ and $\aleph_1$ do not satisfy the third of the equivalent conditions in Theorem~\ref{slessequivthm}, meaning that $\aleph_{\omega+1}\not\triangleright\aleph_1$.
}\end{rem}

\section{Presentation Theorem, EM-Models}

An essential result from the theory of AECs is Shelah's Presentation Theorem, which shows that any AEC can be represented as a PC-class, i.e. given any AEC $\ck$ in signature $L$, there is an extension $L'$ of $L$, a first-order $L'$-theory $T'$ and a set of $T'$-types $\Gamma$ so that
$$\ck=\{ M'\!\upharpoonright\!L \,\,|\,\, M'\models T', M\mbox{ omits }\Gamma\}$$
Indeed, the theorem asserts more, namely that the reduct $\upharpoonright\!\!L$ is functorial from $\ck'=\{ M' \,\,|\,\, M'\models T', M'\hbox{ omits }\Gamma\}$ to $\ck$, and one can easily see that it is faithful, surjective on objects, and preserves directed colimits.  Remark~2.6 in \cite{LR} yields a substantial generalization of this fact, namely the following categorical presentation theorem:

\begin{theo}\label{cover}Let $\ck$ be an accessible category with directed colimits and whose morphisms are monomorphisms.  Then there is a finitely accessible category $\ck'$ and a functor $F:\ck'\to\ck$
that is faithful, surjective on objects, and preserves directed colimits.\end{theo}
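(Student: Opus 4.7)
The plan is to construct $\ck'$ as an Ind-completion of a small full subcategory of $\ck$, with $F$ the canonical colimit-taking functor. Fix a regular cardinal $\lambda$ for which $\ck$ is $\lambda$-accessible, let $\cp$ be a small skeleton of the full subcategory of $\lambda$-presentable objects of $\ck$, and set $\ck' := \Ind(\cp)$. Then $\ck'$ is finitely accessible by construction. Since $\ck$ has directed colimits, the universal property of $\Ind(\cp)$ produces a unique (up to natural isomorphism) functor $F \colon \ck' \to \ck$ that extends the inclusion $\cp \hookrightarrow \ck$ and preserves filtered colimits. Preservation of directed colimits is thus built in, and essential surjectivity follows from $\lambda$-accessibility: every $M \in \ck$ is a $\lambda$-directed, hence filtered, colimit of objects of $\cp$, so $M \cong F(X)$ for the corresponding formal filtered colimit $X \in \ck'$. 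Strict surjectivity on objects can be obtained by passing to an equivalent category without disturbing the remaining properties.

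The main obstacle, and the step that genuinely uses the hypothesis that every morphism in $\ck$ is a monomorphism, is faithfulness of $F$. I would first reduce to the case of a morphism whose domain lies in $\cp$: writing a general $X \in \ck'$ as a filtered colimit of objects from $\cp$ and arguing componentwise, faithfulness for arbitrary parallel pairs follows from faithfulness on each component. With $X \in \cp$ and $Y = \colim_j Y_j$ a filtered colimit presentation in $\ck'$, it then suffices to show injectivity of the canonical comparison
\[
\colim_j \ck(X, Y_j) \longrightarrow \ck\bigl(X, F(Y)\bigr).
\]
Suppose $f \colon X \to Y_j$ and $g \colon X \to Y_k$ represent elements of the left-hand colimit whose images agree in $\ck(X, F(Y))$. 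Pick any $l$ above $j$ and $k$ in the indexing poset, with transition maps $a \colon Y_j \to Y_l$, $b \colon Y_k \to Y_l$, and cocone map $\phi_l \colon Y_l \to F(Y)$. Then $\phi_l \circ (a \circ f) = \phi_l \circ (b \circ g)$ in $\ck$. Since $\phi_l$ is a morphism of $\ck$, it is a monomorphism by hypothesis, forcing $a \circ f = b \circ g$; hence $f$ and $g$ become equal at stage $l$ and define the same class in the colimit. This establishes the desired injectivity.

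The hard part of the argument is really just this mono-theoretic step: for a general $\lambda$-accessible category with directed colimits, the comparison map above can fail to be injective precisely because the diagram indexing $Y$ is only filtered and not $\lambda$-filtered, so $\lambda$-presentability of $X$ does not by itself suffice to identify the formal hom-set with the actual one. The monomorphism hypothesis, imported through the cocone maps $\phi_l$, is exactly the leverage needed to close that gap and deliver faithfulness of $F$.
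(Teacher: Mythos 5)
Your construction is precisely the one the paper intends: $\ck'=\Ind(\cp)$ for $\cp$ a skeleton of the $\lambda$-presentable objects, $F$ the canonical colimit-evaluation functor, with faithfulness reduced to injectivity of the comparison maps $\colim_j\ck(X,Y_j)\to\ck(X,F(Y))$ and extracted from the fact that the cocone maps are monomorphisms (the paper defers these details to Remark~2.6 of \cite{LR}, and identifies $\ck'$ as $\Ind(\cc)$ in Proposition~\ref{bound}). The argument is correct and essentially identical to the paper's, so nothing further is needed.
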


We note that this Presentation Theorem is loosely analogous to Shelah's: the category $\ck'$ is the closure under directed colimits of the subcategory of $\lambda$-presentable objects that generate $\ck$, in which those $\lambda$-presentable objects become finitely presentable.  Shelah's Presentation Theorem, likewise, hinges on a coding of the models of cardinality $\lsk$ by finite tuples.  There are significant differences, however.  Theorem~\ref{cover} holds in significantly greater generality (among other things, it does not require coherence of $(\ck,U)$; more importantly, it also covers mAECs and, indeed, $(\kappa,\lambda)$-CAECs).  Moreover, the closure under directed colimits involved here is a universal construction, by contrast with the somewhat \emph{ad hoc} flavor of the construction of $\ck'$ in Shelah's result.

In any case, this applies to $\kappa$-CAECs, by definition:

\begin{coro}\label{kaecpresthm}For any $\kappa$-CAEC $(\ck,U)$, there is a finitely accessible category $\ck'$ and a functor $\ck'\stackrel{F}{\to}\ck$
that is faithful, surjective on objects, and preserves directed colimits.\end{coro}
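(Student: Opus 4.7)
The plan is to invoke Theorem~\ref{cover} directly, after observing that a $\kappa$-CAEC satisfies its hypotheses on the nose. Concretely, clause (1) of Definition~\ref{klaecdef} stipulates that the underlying category $\ck$ of any $\kappa$-CAEC is accessible, has directed colimits, and has only monomorphisms as morphisms. These are precisely the three hypotheses required by Theorem~\ref{cover}, so the conclusion—existence of a finitely accessible category $\ck'$ together with a faithful functor $F:\ck'\to\ck$ that is surjective on objects and preserves directed colimits—is immediate.

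The one conceptual point worth highlighting is that none of the remaining data in Definition~\ref{klaecdef} enters the argument. The concreteness conditions (2) and (4), together with the repleteness and iso-fullness in (3), constrain the faithful functor $U:\ck\to\Set$, but Theorem~\ref{cover} is formulated purely in terms of the underlying category of $\ck$. In particular, the weakening from concrete directed colimits (as in the setting of \cite{LR}) to merely concrete $\kappa$-directed colimits causes no difficulty here, since $U$ does not appear in the statement or construction. The functor $F$ is obtained by recognizing $\ck$ as the closure under directed colimits of its subcategory of $\lambda$-presentable objects (for a suitable $\lambda$ witnessing accessibility), and taking $\ck'$ to be the free such completion, wherein the original $\lambda$-presentable objects become finitely presentable.

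Since the corollary is a formal consequence of Theorem~\ref{cover} combined with Definition~\ref{klaecdef}(1), there is no real obstacle to address. The main content is the observation—already made immediately before the statement—that Theorem~\ref{cover} was phrased at a level of generality broad enough to subsume the $\kappa$-CAEC framework, so that the categorical analogue of Shelah's Presentation Theorem transfers verbatim.
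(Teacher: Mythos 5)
Your proposal is correct and matches the paper's own (implicit) argument exactly: the corollary is stated as an immediate consequence of Theorem~\ref{cover}, whose hypotheses are precisely clause (1) of Definition~\ref{klaecdef}. Your additional observation that the concreteness, repleteness, and iso-fullness conditions play no role is accurate and consistent with the paper's remark that Theorem~\ref{cover} does not require coherence of $(\ck,U)$.
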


As a special case, we get the promised analogue of Shelah's Presentation Theorem for mAECs:

\begin{coro}\label{presthm}For any mAEC $\ck$, there is a finitely accessible category $\ck'$ and a functor
$$\ck'\stackrel{F}{\to}\ck$$
that is faithful, surjective on objects, and preserves directed colimits.\end{coro}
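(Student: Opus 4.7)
The plan is to obtain this as a direct specialization of the general categorical presentation theorem, since every genuine ingredient has already been assembled earlier in the paper. Concretely, I would invoke Corollary~\ref{kaecpresthm} applied to $\ck$, using the fact, recorded in the Proposition preceding Section~\ref{kaec}, that any mAEC equipped with its underlying set functor $U$ is an $\aleph_1$-CAEC (in particular, an $(\aleph_1,\lsdk^+)$-CAEC). Corollary~\ref{kaecpresthm} then supplies the finitely accessible category $\ck'$ and the faithful functor $F:\ck'\to\ck$ that is surjective on objects and preserves directed colimits, which is exactly the statement to be proved.

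Alternatively, one may bypass the $\kappa$-CAEC machinery and appeal to Theorem~\ref{cover} directly. For this I would verify its two hypotheses for $\ck$: first, that $\ck$ is an accessible category with directed colimits, which is precisely Theorem~\ref{maecsacc}; second, that every morphism of $\ck$ is a monomorphism, which is immediate because $\ck$-embeddings are sortwise isometries (hence injective on underlying sets) and the underlying set functor $U:\ck\to\Set$ is faithful, so injectivity on underlying sets forces the morphism to be a monomorphism in $\ck$. Having checked these two conditions, Theorem~\ref{cover} yields the desired $\ck'$ and $F$ verbatim.

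There is essentially no obstacle to overcome: the heavy lifting is done in Theorem~\ref{cover}, and the present corollary is a bookkeeping exercise verifying that mAECs fall into its hypotheses. The only point worth flagging is the monomorphism condition, but this is automatic for mAECs as noted above.
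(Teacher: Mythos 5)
Your proposal is correct and matches the paper's own (implicit) argument: the corollary is obtained exactly as a special case of Corollary~\ref{kaecpresthm} via the proposition that any mAEC with its underlying set functor is an $\aleph_1$-CAEC, which in turn rests on Theorem~\ref{cover}. Your direct verification of the hypotheses of Theorem~\ref{cover}, including the monomorphism condition via faithfulness of $U$, is also sound.
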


The latter enters a crowded field of presentation theorems, including Fact~5.1 in \cite{HH}, which supports the explicit description of an EM-functor but holds only in the case of homogeneous mAECs in a countable signature.  This has been extended to general mAECs in both a continuous and a discrete version.  


The discrete version, Corollary~6.3 in \cite{Bpres}, is obtained by an ingenious process---passing from an mAEC $\ck$ to an auxiliary (discrete) AEC $\ck_{\rm dense}$ consisting of dense substructures of its models.

\begin{theo}[Boney] Let $\ck$ be an mAEC in signature $L$. Then there is a (discrete) language $L_1$ of size $\lsdk$, an $L_1$-theory $T_1$, and a set of $T_1$-types $\Gamma$ such that 
$$\ck=\{\overline{M_1\!\upharpoonright_{L}}\,|\, M_1\models T_1, M\mbox{ omits }\Gamma\}$$
where the completion is taken with respect to a canonically definable metric.\end{theo}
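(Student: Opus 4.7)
The plan is to follow Boney's strategy: from an mAEC $\ck$, build an auxiliary discrete AEC $\ck_{\rm dense}$ of dense substructures of $\ck$-models, apply the classical Shelah Presentation Theorem to $\ck_{\rm dense}$, and then recover $\ck$ by completing reducts. First I would expand the non-metric part of $L$ to a discrete signature $L_0$ of size $\lsdk$ by adjoining, for each sort $\sigma\in\cs$, a countable family of binary relation symbols $\{R^\sigma_r\mid r\in\mathbb{Q}_{>0}\}$ intended to code $d_\sigma(x,y)<r$, together with analogous rational level-set relations for each $R\in\crr$. Define $\ck_{\rm dense}$ to consist of $L_0$-structures $A$ closed under the function symbols of $\cf$ such that the $R^\sigma_r$ satisfy the rational axioms making each $R^\sigma_r$ the strict-inequality relation of a pseudometric, and such that the canonical completion $\overline{A}$ (with metrics $d_\sigma(a,b):=\inf\{r\mid R^\sigma_r(a,b)\}$ and the natural extensions of functions and predicates) lies in $\ck$ and has $A$ as a dense $L_0$-substructure. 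Order $\ck_{\rm dense}$ by $A\leq_{\rm dense} B$ iff $A\subseteq B$ in $L_0$ and $\overline{A}\ksst\overline{B}$ in $\ck$.

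Next I would verify that $\ck_{\rm dense}$ is a discrete AEC with Löwenheim-Skolem number $\lsdk$: coherence and isomorphism-closure transfer from $\ck$; closure under chains uses the mAEC axiom that completions of $\ksst$-chains remain in $\ck$, applied to the chain of completions of the $L_0$-substructures; and the Löwenheim-Skolem axiom follows by invoking $\lsdk$ in $\ck$ and then extracting a dense $L_0$-subset of the required size (closing under the countably many function symbols does not increase cardinality beyond $\lsdk$). Shelah's classical Presentation Theorem, applied to $\ck_{\rm dense}$, then produces a discrete language $L_1\supseteq L_0$ with $|L_1|=\lsdk$, a first-order $L_1$-theory $T_1$, and a set $\Gamma$ of $T_1$-types such that $\ck_{\rm dense}$ is exactly the class of $L_0$-reducts of models of $T_1$ omitting $\Gamma$. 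For any such $M_1$, the reduct $M_1\!\upharpoonright_L$ still carries the $R^\sigma_r$ (since $L\subseteq L_0\subseteq L_1$), and completing it with respect to the canonically defined metric $d_\sigma(a,b):=\inf\{r\in\mathbb{Q}_{>0}\mid M_1\models R^\sigma_r(a,b)\}$ produces, by the design of $\ck_{\rm dense}$, a metric $L$-structure in $\ck$.

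The main obstacle is establishing the two inclusions of the claimed identity in a fully canonical way. The $\subseteq$ direction is the easier: given $M\in\ck$, the mAEC Löwenheim-Skolem axiom yields a dense $\ck$-substructure of $M$ of density character at most $\lsdk$, from which one extracts a dense $L_0$-structure $A\in\ck_{\rm dense}$ with $\overline{A}\cong M$, and Shelah's theorem then lifts $A$ to some $M_1\models T_1$ omitting $\Gamma$ with $\overline{M_1\!\upharpoonright_L}\cong M$. The $\supseteq$ direction requires that the $L_0$-reduct of \emph{every} $M_1$ omitting $\Gamma$ really belongs to $\ck_{\rm dense}$, so that its completion is automatically a $\ck$-model. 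The delicate point---and the place where Boney's construction earns its ``ingenious'' label---is that ``$A$ embeds densely into some model of $\ck$'' must be expressible intrinsically via $T_1$ and $\Gamma$; this forces one to ensure that $T_1$ contains enough first-order axioms (together with $L_{\omega_1,\omega}$-style conditions encoded through the omitted types in $\Gamma$, in the spirit of the axiomatization of generalized metric spaces in Example~\ref{kaecexams}(3)) guaranteeing that the rational relations $R^\sigma_r$ extend continuously to a bona fide metric on $\overline{A}$ and that the symbols of $\cf$ and $\crr$ extend continuously, so that $\overline{A}$ really does exist as an object of $\ck$ rather than merely as a metric completion of the abstract structure $A$.
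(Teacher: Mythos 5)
This statement is not proved in the paper at all: it is quoted as Boney's result (Corollary~6.3 of \cite{Bpres}), and the only indication the paper gives of its proof is the single sentence describing the passage from $\ck$ to an auxiliary discrete AEC $\ck_{\rm dense}$ of dense substructures. Your outline follows exactly that strategy, so at the level of detail the paper provides there is nothing to contrast it with; the comparison can only be with Boney's own argument.

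As a reconstruction, your plan is the right one, but two points deserve more care than your sketch gives them. First, once you know $\ck_{\rm dense}$ is a genuine AEC, Shelah's Presentation Theorem already tells you that the class of $L_0$-reducts of models of $T_1$ omitting $\Gamma$ is \emph{exactly} $\ck_{\rm dense}$; the $\supseteq$ direction you describe as the delicate point then follows immediately from the definition of $\ck_{\rm dense}$ (every such reduct has its completion in $\ck$ by fiat), so you do not need to build extra axioms into $T_1$ to express ``embeds densely into a $\ck$-model''---that burden has been shifted entirely onto verifying the AEC axioms for $\ck_{\rm dense}$. Second, and this is where the real work lies, your verification of those axioms is only gestured at: in particular, membership of a union of a $\leq_{\rm dense}$-chain in $\ck_{\rm dense}$ requires that the interpretations of the symbols in $\cf$ and $\crr$ on the union are the restrictions of continuous interpretations on its completion, and since the functions in an mAEC are merely continuous (not uniformly so), a continuous function on a dense subset need not extend to the completion---one must argue that here the extension is inherited from the ambient completed chain $\overline{\bigcup_i\overline{A_i}}$, which the mAEC chain axiom places in $\ck$. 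Similarly, the level-set condition defining $\mStr(L)$-embeddings for the $[0,1]$-valued predicates in $\crr$ is not obviously captured by rational ``level-set relations'' alone. These are exactly the places where Boney's construction does nontrivial work, and your proposal names them without closing them; as written it is a plausible outline rather than a proof.
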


In each case, the result can be rewritten to resemble our Corollary~\ref{presthm}, the crucial difference being the nature of the category $\ck'$ used to cover the mAEC $\ck$.  By allowing ourselves a certain flexibility in our choice of presentation functor $F$---not necessarily a reduct but, by virtue its accessibility, nonetheless susceptible to a logical characterization---we are able to find $\ck'$ finitely accessible.  As for AECs in \cite{LR}, this is enough to guarantee the existence of an EM-functor for any large $\kappa$-CAEC.

\begin{theo}\label{kaecemfunct}Let $(\ck,U)$ be a large $\kappa$-CAEC.  Then there is a faithful functor $E:\Lin\to\ck$ that preserves directed colimits and, moreover, there is a cardinal $\lambda_E$ such that $E$ preserves all sizes $\lambda$ with $\lambda^+\geq\lambda_E$.\end{theo}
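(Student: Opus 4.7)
The plan is to follow the strategy used in \cite{LR} for AECs, reducing the problem to the case of a large finitely accessible category via the categorical Presentation Theorem. First, I would apply Corollary~\ref{kaecpresthm} to $(\ck,U)$ to obtain a finitely accessible category $\ck'$ and a functor $F:\ck'\to\ck$ that is faithful, surjective on objects, and preserves directed colimits. Since $\ck$ is large and $F$ is essentially surjective, $\ck'$ inherits largeness.

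Next, I would invoke the EM-functor construction for large finitely accessible categories that appears in \cite{LR}. The argument given there for AECs in fact requires only that the source be a large finitely accessible category whose morphisms are monomorphisms---both of which hold for $\ck'$---and so it applies verbatim. This yields a faithful, directed-colimit-preserving functor $E':\Lin\to\ck'$, together with a cardinal $\lambda_{E'}$ such that $E'$ preserves all sizes $\lambda$ with $\lambda^+\geq\lambda_{E'}$. Setting $E = F\circ E':\Lin\to\ck$, faithfulness and preservation of directed colimits follow at once from the corresponding properties of $F$ and $E'$.

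It then remains to establish size preservation for $E$. Since $F$ preserves directed colimits in the ordinary sense (i.e.\ $\aleph_0$-directed, so no sharp-less condition intervenes), \cite{BR}~4.3 yields a cardinal $\lambda_F$ such that $F$ preserves $\mu$-presentable objects for all $\mu\geq\lambda_F$; since every morphism of $\ck$ is a monomorphism, the faithfulness of $F$ moreover prevents the presentability rank from strictly dropping, so that $F$ preserves sizes $\lambda$ with $\lambda^+\geq\lambda_F$. Taking $\lambda_E=\max(\lambda_{E'},\lambda_F)$ then closes the argument. The main point requiring care is precisely this last step: in the AEC setting of \cite{LR}, size preservation can be read directly off the coherent underlying-set functor $U$, while in a $\kappa$-CAEC $U$ is coherent only on $\kappa$-directed colimits, so one is obliged to argue with the intrinsic presentability rank in $\ck$ itself, appealing to faithfulness and monomorphicity of morphisms rather than to cardinalities in $\Set$.
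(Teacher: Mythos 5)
Your overall route is exactly the paper's: the proof here is simply a citation of Corollary~2.7 of \cite{LR}, whose argument is precisely the reduction you describe --- pass to the finitely accessible cover $\ck'$ via the presentation theorem (Corollary~\ref{kaecpresthm}), build the EM-functor there using the classical syntactic machinery (Skolemization of the $L_{\mu^+,\omega}$ axiomatization of $\ck'$, indiscernibles), and compose with $F$. Your observations that $\ck'$ is large and that its morphisms are monomorphisms (faithful functors reflect monos) are correct, and faithfulness and preservation of directed colimits for $E=F\circ E'$ do follow immediately.

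The gap is in the size-preservation step, specifically the claim that ``the faithfulness of $F$ prevents the presentability rank from strictly dropping.'' This is not a valid general principle, and your appeal to monomorphicity does not rescue it. The standard argument runs: if $K$ has rank $\lambda^+$ but $F(K)$ is $\mu^+$-presentable for $\mu<\lambda$, write $K$ as a $\mu^+$-directed colimit of $\mu^+$-presentable objects $K_i$; then $\id_{F(K)}$ factors through some $F(K_i)$, so $F(k_i)$ is a split epimorphism which is also a monomorphism, hence an isomorphism. But this only yields $F(K)\cong F(K_i)$; without fullness or conservativity of $F$ one cannot conclude $K\cong K_i$, so no contradiction with the rank of $K$ is obtained. (Faithfulness gives an injection $\ck'(X,Y)\hookrightarrow\ck(FX,FY)$, which places no lower bound on the rank of $FX$.) The paper and \cite{LR} control the sizes of EM-models differently: one works with the explicit description of $E'(I)$ as the Skolem hull of $I$ in $\Mod(T^\ast)$, whose cardinality is $|I|\cdot 2^\mu$ (this is exactly the computation in Proposition~\ref{bound}), and uses \cite{BR}~4.3 only for the upper bound on presentability rank; the lower bound is extracted from the concrete syntactic picture rather than from abstract properties of $F$. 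As written, your argument establishes only that $E$ does not increase sizes above $\lambda_E$, not that it preserves them.
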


\begin{proof}See Corollary~2.7 in \cite{LR}.\end{proof}

In fact, we can give a clear bound on $\lambda_E$:

\begin{propo}\label{bound} Let $(\ck,U)$ be a $(\kappa,\lambda)$-CAEC, and let $\mu$ be the number of morphisms among objects of $\ck$ of size $\lambda$. Then $\lambda_E$ can be taken to be $(2^\mu)^+$.\end{propo}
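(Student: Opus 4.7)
The plan is to refine the proof of Theorem~\ref{kaecemfunct}---that is, of Corollary~2.7 of \cite{LR}---by tracking cardinal bounds carefully through the construction of the EM-functor.

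Recall the construction. The Presentation Theorem (Theorem~\ref{cover}, equivalently Corollary~\ref{kaecpresthm}) yields a finitely accessible category $\ck'$ and a faithful, directed-colimit-preserving, object-surjective functor $F:\ck'\to\ck$; since $\ck$ is large, so is $\ck'$. A Morley-style indiscernible-extraction argument inside $\ck'$---where the subcategory of finitely presentable objects is essentially small---then produces a functor $E':\Lin\to\ck'$, and one sets $E=F\circ E'$.

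For any infinite linear order $L$, $E'(L)$ is a directed colimit, indexed by a diagram whose shape is built from the finite increasing tuples of $L$, of finitely presentable objects of $\ck'$. Applying $F$ expresses $E(L)$ as a directed colimit in $\ck$ of objects whose sizes are bounded by $\lambda$. By hypothesis, at most $\mu$ morphisms---hence at most $\mu$ such pieces up to isomorphism---can figure in the diagram.

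I then estimate $|E(L)|$ for $|L|=\lambda'$ with $\lambda'\geq 2^\mu$, so that $\lambda'^{+}\geq(2^\mu)^{+}=\lambda_E$. For the upper bound, $E(L)$ is a $\lambda'$-indexed directed colimit of objects of size at most $\lambda\leq\mu\leq 2^\mu\leq\lambda'$; a standard size computation in accessible categories, combined with Corollary~\ref{klaecprespres} to pass between presentability rank and cardinality under $U$, yields $|E(L)|\leq\lambda'$. For the lower bound, the EM construction naturally embeds $L$ as a family of distinguished morphisms from a fixed small object into $E(L)$, and, via faithfulness of $U$ together with concreteness of monomorphisms, these translate into $\lambda'$ distinct elements of $U(E(L))$, giving $|E(L)|\geq\lambda'$.

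The main obstacle is the lower bound: one must verify that the categorical indiscernibles transfer faithfully through $F$ to yield genuinely distinct underlying elements. This is routine in a concrete AEC but delicate in a $\kappa$-CAEC, where directed colimits need not be concrete below $\kappa$. The argument succeeds above the threshold $(2^\mu)^{+}$ because at that scale the relevant portion of the indexing diagram can be refined to a $\kappa$-directed one, restoring concreteness and producing the required $\lambda'$ distinct elements. This gives $\lambda_E\leq (2^\mu)^{+}$, as claimed.
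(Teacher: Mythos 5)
Your proposal misses the mechanism by which the bound $(2^\mu)^+$ actually arises, and the explanation you offer in its place does not work. The paper's proof is logical rather than diagrammatic: since $\ck'=\Ind(\cc)$ with $\cc$ the essentially small subcategory of objects of size $\lambda$, the category $\ck'$ is axiomatized by a basic theory $T$ of $L_{\mu^+,\omega}(\Sigma)$, where $\Sigma$ is built from the $\mu$ morphisms of $\cc$. One then Skolemizes, adding an operation symbol $f_\varphi$ for each existential formula; since $L_{\mu^+,\omega}(\Sigma^\ast)$ has $2^\mu$ formulas, the Skolem hull of a finite tuple---which is the actual building block of the EM-model---can have size $2^\mu$. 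This is the sole source of the threshold: $E(L)$ is a union of $|L|^{<\omega}=|L|$ such hulls, so its size is at most $|L|+2^\mu$, which equals $|L|$ exactly when $|L|\geq 2^\mu$.

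Your version asserts instead that $E(L)$ is a directed colimit of pieces of size at most $\lambda$ (the finitely presentable objects of $\ck'$, pushed through $F$). That conflates the finitely presentable objects of $\ck'$ with the Skolem hulls: the latter are directed colimits of the former, of size up to $2^\mu$, and without the Skolem functions the assignment $L\mapsto E(L)$ is not even functorial. If your ``standard size computation'' with pieces of size $\lambda$ were the whole story, it would prove the much stronger bound $\lambda_E=\lambda^+$, which the paper does not claim---indeed it remarks after Corollary~\ref{emfunct} that improving on $(2^\mu)^+$ would require a different, syntactic presentation. Finally, your attempt to recover the threshold from the lower bound, via concreteness of $\kappa$-directed colimits, is a red herring: the proposition and its proof make no use of $U$ or of concreteness at all, and the lower bound on the size of $E(L)$ (faithfulness of $E$ plus preservation of directed colimits) holds independently of $\mu$. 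The $(2^\mu)^+$ is entirely an artifact of the cardinality of the Skolemized language.
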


\begin{proof} In fact, the finitely accessible category $\ck'$ from \ref{cover} is $\Ind(\cc)$ where $\cc$ is the full subcategory of $\ck$ consisting of objects
of size $LS^d(\ck)$. Thus it can be axiomatized by a basic theory $T$ of $L_{\mu^+,\omega}(\Sigma)$ (see \cite{AR} 5.35). Let $T^\ast$ be the skolemization 
of $T$ given by adding operation symbols $f_\varphi$ for each formula $\varphi=(\exists x)\psi$, where $\psi$ has free variables $x_1,\dots,x_n$, and formulas
$$
(\forall x_1,\dots,x_n)(\varphi(x_1,\dots,x_n)\to\psi(f_\varphi(x_1,\dots,x_n),x_1,\dots,x_n).
$$
Let $\Sigma^\ast$ be the resulting signature.
This skolemization has the property that each subset $X$ of a $T^\ast$-model $M$ generates the smallest $T^\ast$-model containing $X$ and being included
in $M$. Then the EM-functor $\Lin\to\Mod(T^\ast)$ sends finite chains to finitely generated submodels of a suitable $T^\ast$-model $M$. Since 
$L_{\mu^+,\omega}(\Sigma^\ast)$ has $2^\mu$ formulas, these finitely generated models have size $2^\mu$.\end{proof}

One might think of this $\lambda_E$ as an analogue of the kind of minimal bloating one expects when passing from a set of indiscernibles to its Skolem hull in more traditional accounts of EM-models.  Noting that any mAEC with arbitrarily large models is a large $\aleph_1$-CAEC, we have:

\begin{coro}\label{emfunct}Let $\ck$ be an mAEC with arbitrarily large models.  Then there is a faithful functor $E:\Lin\to\ck$ that preserves directed colimits and, moreover, there is a cardinal $\lambda_E$ such that $E$ preserves all sizes $\lambda$ with $\lambda^+\geq\lambda_E$.\end{coro}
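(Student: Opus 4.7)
The plan is to obtain this as a direct specialization of Theorem~\ref{kaecemfunct} once we verify that the hypotheses apply. By the proposition immediately following Theorem~\ref{maecsasconcr}, any mAEC $\ck$ equipped with its underlying set functor $U$ is an $\aleph_1$-CAEC (in fact, an $(\aleph_1,\lsdk^+)$-CAEC). Thus the only nontrivial verification is that ``has arbitrarily large models'' for an mAEC coincides with the notion of ``large'' used in Theorem~\ref{kaecemfunct}.

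For this, I would invoke the moreover clause of Theorem~\ref{maecsacc}: an object $M\in\ck$ has presentability rank $\lambda^+$ precisely when $\dc(M)=\lambda$. Hence if $\ck$ has models of arbitrarily large density character, it has objects of arbitrarily large presentability rank, and so is large in the category-theoretic sense used in \cite{LR} and in Theorem~\ref{kaecemfunct}. Applying Theorem~\ref{kaecemfunct} with $\kappa=\aleph_1$ then immediately produces the faithful, directed-colimit-preserving functor $E:\Lin\to\ck$ and the cardinal $\lambda_E$ above which $E$ preserves sizes.

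To make the bound on $\lambda_E$ explicit---though the statement of the corollary does not require this---I would appeal to Proposition~\ref{bound}. Since $\ck$ is an $(\aleph_1,\lsdk^+)$-CAEC, the parameter $\lambda$ there can be taken to be $\lsdk^+$, so that $\mu$ is bounded by the number of $\ck$-embeddings among the (set of) objects of size $\lsdk$, and $\lambda_E$ can be taken to be $(2^\mu)^+$. No further work is needed; the substantive content has already been absorbed into Proposition~\ref{bound} and Theorem~\ref{kaecemfunct}.

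The main obstacle, insofar as there is one, is purely bookkeeping: one must be careful that the two notions of ``large''---having arbitrarily large models in the mAEC sense (arbitrarily large density character) and having objects of arbitrarily large presentability rank in the categorical sense---are actually identified, and this is exactly what the moreover clause of Theorem~\ref{maecsacc} accomplishes. Once that identification is in hand, the corollary is a one-line consequence.
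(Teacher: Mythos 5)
Your proposal is correct and matches the paper's own treatment, which derives the corollary in one line by noting that any mAEC with arbitrarily large models is a large $\aleph_1$-CAEC and then invoking Theorem~\ref{kaecemfunct}. Your explicit verification that ``arbitrarily large models'' coincides with categorical largeness via the moreover clause of Theorem~\ref{maecsacc} is exactly the identification the paper leaves implicit.
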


Again, the bound of Proposition~\ref{bound} applies to mAECs as well, meaning that the size of $\lambda_E$ is controlled by the number of morphisms between models of density character $\lsdk$.  This generalizes the existence result \cite{HH} 5.7, which holds only in the countable, homogeneous case. The syntactic presentation in \cite{Bpres} should yield a comparable result for general mAECs, with a potentially smaller $\lambda_E$: in particular, it should suffice to take $\lambda_E=\lsdk$.  

\section{Stability}\label{stabsection}

We now turn our attention to the relationship between categoricity and Galois-stability in mAECs.  Recall that the definition of Galois types in this context exactly matches the definition in AECs, although for our purposes it is more convenient to replace incidences of $\ksst$ with $\ck$-embeddings, and to make the superficial generalization to $(\kappa,\lambda)$-CAECs:

\begin{defi} {\em Let $\ck$ be a $(\kappa,\lambda)$-CAEC.  For any $M\in\ck$, we define a relation on pairs $(f,a)$, where $f:M\to N$ is a $\ck$-embedding and $a\in U(N)$, as follows: given $(f_1,a_1)$ and $(f_2,a_2)$ with $f_i:M\to N_i$, $(f_1,a_1)\sim (f_2,a_2)$ if and only if there is $N\in\ck$ and embeddings $g_i:N_i\to N$ such that $g_1f_1=g_2f_2$ and $U(h_1)(a_1)=U(h_2)(a_2)$.}\end{defi}

Assuming the amalgamation property, this is an equivalence relation.  By a {\em Galois type over $M\in\ck$}, we mean an equivalence class of such pairs.  If, in addition, we assume joint amalgamation and the existence of arbitrarily large models, we have recourse to a {\em monster model} $\monst$ in $\ck$.  Per \cite{LR}~4.3, we may simply identify Galois types over an object $M$ with orbits in $\monst$ fixing $M$, as in AECs (see, e.g., \cite{Ba}~8.9) and mAECs (see, e.g., \cite{Z}~1.3.7).

While the standard treatment in AECs considers the set of types over a model $M$ as a discrete set---or possibly a topological space, as in \cite{Ltop}---this is not in the spirit of mAECs.  Associating types with orbits of elements in $\monst$, which is itself a metric structure, we obtain a pseudometric on the set of types over $M$: let $\metr$ be the infimum of the distances between elements of respective orbits.  In fact, this pseudometric becomes a metric on the set of types, assuming what is variously known as the {\em perturbation property} (see \cite{HH}~2.12) or {\em continuity of types property} (see \cite{VZ}~2.9): for any $M\in\ck$ and convergent sequence $(a_n)\to a$ in $\monst$, if $\gatp(a_n/M)=\gatp(a_0/M)$ for all $n$, then $\gatp(a/M)=\gatp(a_0/M)$.  With respect to the metric $\metr$, one can give metric refinements of Galois-type-theoretic notions familiar from AECs, as we will see momentarily.  

In the results that follow, we make the following blanket assumptions:

\begin{assume}\label{assume} {\em Henceforth all $(\kappa,\lambda)$-CAECs are assumed to be large and to satisfy the joint embedding and amalgamation properties.  All mAECs are assumed to contain arbitrarily large models, and to satisfy the joint embedding, amalgamation, and perturbation properties.}\end{assume}

We focus first on stability, which we present here in two forms: a discrete version attuned to the general case of $(\kappa,\lambda)$-CAEC, and the fully metric version best suited to mAECs.  Following \cite{Z}, we restrict ourselves to types over models, rather than sets.

\begin{defi} {\em \begin{enumerate}
\item Let $(\ck,U)$ be a $(\kappa,\lambda)$-CAEC.  We say that it is \textit{$\mu$-stable} if for all $M\in\ck$ of size $\mu$ (in the sense of $\ck$), $\gat(M))\leq\mu$.
\item Let $\ck$ be an mAEC.  We say that $\ck$ is \textit{$\mu$-$\metr$-stable} if for any $M$ in $\ck$ with $\dc(M)=\mu$, then $\dc(\gat(M))\leq\mu$.\end{enumerate}}\end{defi}

That is, $\ck$ is $\mu$-$\metr$-stable if for any $M$ of size $\mu$ in the sense of $\ck$, the set of types over $M$ is of size at most $\mu$ in the sense of $\Met$, the category of complete metric spaces and contractions.  It may be significant that this exactly matches the discrete version of $\mu$-stability, but with $\Met$ in place of $\Set$.  Indeed, this points the way to a broader project: to properly engage with $\metr$-stability, $\metr$-tameness, and other related notions, it would be beneficial to forget less structure than we do with a discretizing functor $U:\ck\to\Set$---the authors have already made progress along these lines.  For the present, though, we restrict ourselves to the tools afforded by $(\kappa,\lambda)$-CAEC theory.  As $\mu$-stability in the discrete sense certainly implies $\mu$-$\metr$-stability, they are more than sufficient.

We now consider the question of stability of a $(\kappa,\lambda)$-CAEC below a categoricity cardinal.  Note that we define categoricity in the sense of the underlying category $\ck$: we say that it is $\nu$-categorical if it contains (up to isomorphism) exactly one object of size $\nu$, i.e. of presentability rank $\nu^+$.  

\begin{rem}{\em In case $(\ck,U)$ arises from an mAEC, $\nu$-categoricity asserts the existence of a unique model of density character (rather than cardinality) $\nu$.  This corresponds to the definition of $\nu$-$\metr$-categoricity in \cite{VZ}}.\end{rem}

The central theorem is the following, which is adapted from Theorem~7.4 in \cite{LR}.  Recalling Assumption~\ref{assume}, we have:

\begin{theo}\label{stabbelowcat}Let $(\ck,U)$ be a $(\kappa,\lambda)$-CAEC. If $\ck$ is $\nu$-categorical, then $\ck$ is $\mu$-Galois stable for all $\lambda_U+\lambda_E\leq\mu^+\leq\nu$ with $\mu^+\triangleright\kappa$.\end{theo}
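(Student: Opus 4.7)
The plan is to adapt the proof of Theorem~7.4 in \cite{LR} to the $\kappa$-CAEC setting, substituting the bounds $\mu^+\geq\lambda_U+\lambda_E$ and the sharp inequality $\mu^+\triangleright\kappa$ for the preservation of all directed colimits by $U$ that was available for ordinary AECs.

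I proceed by contradiction: suppose $M\in\ck$ has size $\mu$ but $|\gat(M)|>\mu$, and fix witnesses $(a_\alpha)_{\alpha<\mu^+}\subseteq U(\monst)$ of $\mu^+$ pairwise distinct Galois types over $M$. Form the EM-model $E(I)$ over a suitably rich linear order $I$ of cardinality $\nu$ via Theorem~\ref{kaecemfunct}; since $\nu^+\geq\mu^+\geq\lambda_E$, $E(I)$ has size $\nu$, so by $\nu$-categoricity I may identify it with a $\ck$-submodel of $\monst$ containing $M$ together with every $a_\alpha$.

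The crux, as in \cite{LR}, is to bound by $\mu$ the number of Galois types over $M$ realized in $E(I)$. Corollary~\ref{klaecprespres}, together with $\mu^+\geq\lambda_U$ and $\mu^+\triangleright\kappa$, gives $|U(M)|=\mu$; by Remark~\ref{stronger}, $\ck$ is $\mu^+$-accessible and $E(I)$ is a $\mu^+$-directed colimit of its sub-EM-models $E(J)$ indexed by subsets $J\subseteq I$ with $|J|\leq\mu$. Since $M$ is $\mu^+$-presentable, its embedding into $E(I)$ factors through some $E(I_M)$ with $|I_M|\leq\mu$, and each $a_\alpha$ is captured analogously inside $E(I_M\cup I_\alpha)$ for a sub-skeleton $I_\alpha\subseteq I$ of size $\leq\mu$. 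The indiscernibility built into the EM-functor, combined with the homogeneity of $I$, then forces the Galois orbit of $a_\alpha$ over $M$ to depend only on the order-theoretic type of $I_\alpha$ over $I_M$: for a judiciously chosen $I$, this count is at most $\mu$, contradicting the existence of $\mu^+$ distinct orbits.

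The main obstacle is the indiscernibility step: one must verify that order-automorphisms of $I$ fixing $I_M$ setwise and carrying $I_\alpha$ onto $I_\beta$ lift through $E$ to $\ck$-automorphisms of $E(I)$ fixing $M$, and that these extend via monster-model homogeneity to automorphisms of $\monst$ witnessing Galois equivalence. The $\kappa$-CAEC setting adds a wrinkle absent from the AEC argument of \cite{LR}: because $U$ preserves only $\kappa$-directed colimits, each witness $a_\alpha$ is tied to a $\mu$-sized sub-skeleton of $I$ rather than a finite one, and the associated order-combinatorics must be rerun with $\mu^+$-directed approximations in place of the finite supports that sufficed in the strict AEC case. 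This is precisely the role of the hypotheses $\mu^+\geq\lambda_U$ and $\mu^+\triangleright\kappa$, which together ensure that $U$ correctly tracks the $\mu^+$-directed data needed to carry the argument through.
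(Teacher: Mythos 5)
Your overall strategy---counting the Galois types realized in $E(I)$ by the order-types of small supports over $I_M$---is the classical Ehrenfeucht--Mostowski argument, but the step you defer (``for a judiciously chosen $I$, this count is at most $\mu$'') is exactly where the proof would have to live, and as you have set it up it fails. First, the support bound: since $E$ preserves directed colimits and $U$ preserves $\kappa$-directed colimits, $U(E(I))$ is the union of the $U(E(J))$ over $J\subseteq I$ with $|J|<\kappa$, so each $a_\alpha$ has a support $I_\alpha$ of size $<\kappa$, not merely $\leq\mu$ as you assert. This matters: already for a well-ordered $I$ the number of order-isomorphism classes of pairs $(I_M\cup I_\alpha,\,I_M)$ with $|I_\alpha|\leq\mu$ is at least $\mu^+$ (there are $\mu^+$ order types of well-orders of size $\leq\mu$), so with $\mu$-sized supports the count cannot come out to $\mu$ and the contradiction evaporates. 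Even with the correct $<\kappa$ bound, the number of configurations is on the order of $\mu^{<\kappa}$, and the hypothesis $\mu^+\triangleright\kappa$ does not obviously yield $\mu^{<\kappa}=\mu$ (compare Proposition~\ref{sharpcard} and Remark~\ref{GCHsharp}, where $\mu^+\triangleright\lambda^+$ is compatible with $\mu^\lambda>\mu$ when $\mu^+\leq 2^\lambda$); a separate argument is needed here. Two smaller problems: the order-automorphisms of $I$ must fix $I_M$ \emph{pointwise}, not setwise, if their images under $E$ are to fix $M$ pointwise, which is what Galois equivalence over $M$ requires; and an order-isomorphism of configurations induces an isomorphism $E(I_M\cup I_\alpha)\cong E(I_M\cup I_\beta)$ but not one carrying $a_\alpha$ to $a_\beta$, so the count acquires an extra factor of $|U(E(I_M\cup I_\alpha))|$ (harmless once sizes are preserved, but it must be accounted for).

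The paper takes a different route that avoids all of this combinatorics, lifted from Lemmas~7.7 and~7.8 of \cite{LR}: one first shows that $E(I)$ is brimful whenever $I$ is a brimful linear order (a purely categorical fact making no reference to $U$), then deduces that every Galois type over an $M_0$ of size $\mu$ embeddable in the categorical model is realized in an intermediate $\mu$-universal object $\bar{M}$ of size $\mu$, and finally bounds the number of types by $|U(\bar{M})|\leq\mu$---which is precisely what Corollary~\ref{klaecprespres} provides under $\mu^+\geq\lambda_U$ and $\mu^+\triangleright\kappa$. The only counting required is of the points of a single object whose size $U$ preserves, with no analysis of supports at all. To salvage your approach you would need to prove the estimate $\mu^{<\kappa}\leq\mu$ from the stated hypotheses (or add it as a hypothesis); as written, the key step is not established.
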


Notice the appearance of the sharp inequality relation: unlike in \cite{LR}, $U$ does not preserve all sufficiently large sizes, but rather those sharply larger than $\kappa$.  This extra piece of bookkeeping is the only essential change from Theorem~7.4 in \cite{LR}, and we give an argument for Theorem~\ref{stabbelowcat} above that parallels the proof of that theorem (and, in turn, the AEC-centric argument of \cite{Ba1}), indicating only the areas where modifications are required. 

\begin{lemma}Let $(\ck,U)$ be a $(\kappa,\lambda)$-CAEC and $E:\Lin\to\ck$ an EM-functor as in Corollary~\ref{emfunct}.  If $I$ brimful as a linear order, then $E(I)$ is brimful in $\ck$.\end{lemma}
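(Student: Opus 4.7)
The plan is to peel back brimfulness to an internal universality condition along a chain witnessing it, apply the EM-functor $E$ termwise, and then verify that universality transfers from $\Lin$ to $\ck$ at each step.

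First I would unpack brimfulness: $I$ being brimful in $\Lin$ means $I$ arises as a well-ordered union $I = \bigcup_{\alpha < \delta} I_\alpha$ in which each inclusion $I_\alpha \hookrightarrow I_{\alpha+1}$ is \emph{universal} over $I_\alpha$ among linear orders extending $I_\alpha$ within the appropriate size bound. By Corollary~\ref{emfunct}, $E$ is faithful and preserves directed colimits, and since all morphisms of $\ck$ are monomorphisms, $E(I) = \colim_{\alpha<\delta} E(I_\alpha)$ presents $E(I)$ as a chain in $\ck$. Brimfulness of $E(I)$ then reduces to showing that each transition $E(I_\alpha) \hookrightarrow E(I_{\alpha+1})$ is universal over $E(I_\alpha)$ in $\ck$ among suitably bounded $\ck$-extensions.

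For this universality step I would take an arbitrary $\ck$-embedding $h \colon E(I_\alpha) \to N$ with $|N|$ at most the relevant bound and aim to produce a factorization $N \to E(I_{\alpha+1})$ over $E(I_\alpha)$. The strategy is to amalgamate $N$ with $E$-images inside the monster model $\monst$ guaranteed by Assumption~\ref{assume}, using the indiscernible-sequence behavior of $E$ to realize $N$, up to $\ck$-embedding over $E(I_\alpha)$, inside some $E(J)$ for a linear order $J \supseteq I_\alpha$ whose size is controlled by $|N|$ together with the EM-threshold $\lambda_E$ from Proposition~\ref{bound}. Once $N$ is absorbed into such an $E(J)$, brimfulness of $I$ in $\Lin$ produces an order-embedding $J \hookrightarrow I_{\alpha+1}$ fixing $I_\alpha$; applying $E$ and composing with the absorbing embedding yields the desired $\ck$-embedding $N \to E(I_{\alpha+1})$ over $E(I_\alpha)$.

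The main obstacle is the absorption step: showing that an arbitrary bounded-size $\ck$-extension $N$ of $E(I_\alpha)$ can be realized inside $E(J)$ for some $J$ extending $I_\alpha$ with $|J|$ small enough that brimfulness of $I$ applies. Unlike in the homogeneous setting of \cite{HH}, we cannot appeal to standard indiscernible extraction, and the success of the argument rests on coordinating three pieces of size bookkeeping: the sharp-inequality constraints governing $U$-preservation (Corollary~\ref{klaecprespres}), the size bound on $E$ from Proposition~\ref{bound}, and the size of $N$ itself, all assembled inside $\monst$ via the amalgamation property.
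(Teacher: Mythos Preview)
Your plan founders precisely at the step you flag as the main obstacle, and the tools you propose do not close the gap. Amalgamation inside $\monst$ lets you jointly embed $N$ and an EM-model into some common extension, but it gives no embedding of $N$ \emph{into} any $E(J)$; size bookkeeping via Corollary~\ref{klaecprespres} and Proposition~\ref{bound} controls presentability ranks but says nothing about why an arbitrary $\ck$-extension of $E(I_\alpha)$ should sit inside an EM-model; and ``indiscernible-sequence behavior of $E$'' is not a usable hypothesis here---in this generality $E$ is simply a faithful, directed-colimit-preserving, eventually size-preserving functor, with no further syntactic structure to exploit. EM-models are not universal objects of $\ck$ absent additional hypotheses (such as categoricity at the relevant size, which this lemma does not assume), so the absorption step as you have framed it does not go through.

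The paper's argument is different in kind: it invokes Lemma~7.7 of \cite{LR} and observes that the proof there makes no use of the functor $U$ at all---hence no monster model, no elements, no appeal to Corollary~\ref{klaecprespres}. That alone tells you the argument cannot run through realizing arbitrary $\ck$-objects inside EM-models, since that would require speaking of elements of $U(N)$. The transfer in \cite{LR} is arranged so that brimfulness passes along $E$ purely from its functorial properties (faithfulness, preservation of directed colimits and of sizes above $\lambda_E$), together with the precise definition of brimful adopted there. You should consult \cite{LR} for that definition and for the shape of the categorical argument; your reconstruction of brimfulness as ``chain with each successor step universal over its predecessor for arbitrary $\ck$-extensions'' may already be stronger than what is actually used.
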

\begin{proof} The argument for the analogous result in \cite{LR}, Lemma~7.7, makes no use of the functor $U$, hence goes through without change.\end{proof}

\begin{lemma}Let $(\ck,U)$ be a $(\kappa,\lambda)$-CAEC and $E:\Lin\to\ck$ an EM-functor.  If $\ck$ is $\nu$-categorical, then for all $\mu$ with $\lambda_U+\lambda_E\leq\mu^+\leq\nu$ and $\mu^+\triangleright\kappa$, the unique object $M$ of size $\nu$ is $\mu$-stable: that is, there are at most $\mu$ Galois types over any $M_0$ of size $\mu$ admitting an embedding $M_0\to M$.\end{lemma}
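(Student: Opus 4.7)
The plan is to mimic the argument for Lemma~7.8 in \cite{LR}, tracking the points at which the weaker preservation properties of $U$ in a $(\kappa,\lambda)$-CAEC (Corollary~\ref{klaecprespres}) force us to invoke the hypothesis $\mu^+\triangleright\kappa$. First, apply the preceding lemma to a brimful linear order $I$ of cardinality $\nu$: then $E(I)$ is brimful in $\ck$ and of size $\nu$, so $\nu$-categoricity allows us to identify $E(I)\cong M$. Any embedding $M_0\to M$ therefore yields an embedding $M_0\to E(I)$, and the first task is to factor this through $E(J_0)$ for some sub-order $J_0\subseteq I$ of cardinality $\mu$. Since $E$ preserves directed colimits, $E(I)$ is a $\mu^+$-directed colimit of $E(J)$ as $J$ runs through sub-orders of $I$ of cardinality at most $\mu$; and $M_0$, being of size $\mu$, is $\mu^+$-presentable in $\ck$, so the factorization exists.

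Next, for each element $a\in U(M)$, I would show that $a$ is hit by $U(E(J_0\cup F))$ for some finite $F\subseteq I$. This uses that $E(I)$ is the directed colimit of $E(J_0\cup F)$ over finite $F$, together with the fact that, thanks to $\mu^+\triangleright\kappa$ and $\mu^+\geq\lambda_U$, Corollary~\ref{klaecprespres} lets us transfer the presentability rank computations into $\Set$. The Galois type $\gatp(a/M_0)$ is then controlled by $\gatp(a/E(J_0))$, and the standard EM-model argument---promoting partial order-automorphisms of $I$ fixing $J_0$ to $\ck$-automorphisms of the monster fixing $E(J_0)$, by brimfulness of $I$---reduces this Galois type to the data of the order-isomorphism class of $F$ over $J_0$ together with a Skolem-term representation of $a$ from parameters in $J_0\cup F$, as supplied by Proposition~\ref{bound}.

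Finally I would count. The number of finite order types over $J_0$ is at most $|J_0|=\mu$, and for each, the number of admissible Skolem terms is bounded by $\lambda_E\leq\mu^+$, whence $|\gat(M_0)|\leq\mu$. This yields $\mu$-stability of $M$ in the sense stated.

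The main obstacle I foresee lies in the factoring step and the parallel counting step at the end: in the classical AEC argument of \cite{Ba1} and in \cite{LR}, one simply invokes the fact that $U$ preserves all sufficiently large sizes, whereas here we have only Corollary~\ref{klaecprespres}, which depends on $\mu^+\triangleright\kappa$ together with $\mu^+\geq\lambda_U$. Verifying that these hypotheses---precisely the ones appearing in the statement---do suffice, both for factoring the embedding $M_0\to E(I)$ through a small $E(J_0)$ and for translating the category-theoretic bound on types into a set-theoretic bound, is the sole non-routine adjustment required. Everything else transfers from \cite{LR} verbatim.
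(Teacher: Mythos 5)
Your strategy---the classical EM-model count via Skolem terms and order-types of finite tuples over $J_0$, in the style of \cite{Ba1}---is not the route the paper takes, and in the $(\kappa,\lambda)$-CAEC setting it breaks down at two points. The paper instead follows \cite{LR}, Lemma~7.8 directly: one shows that every Galois type over $M_0$ is realized in a single intermediate $\mu$-universal object $\bar{M}$ of size $\mu$, and the count is then just the surjection $U(\bar{M})\twoheadrightarrow\gat(M_0)$ together with $|U(\bar{M})|\leq\mu$; this last inequality is the one and only place where Corollary~\ref{klaecprespres} and the hypothesis $\mu^+\triangleright\kappa$ are invoked. That argument never needs to represent individual elements of $U(M)$ by Skolem terms in parameters from $I$.

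The first genuine gap in your version is the claim that every $a\in U(M)$ lies in $U(E(J_0\cup F))$ for some finite $F\subseteq I$. This would require $U$ to preserve the merely finitely directed colimit $E(I)=\colim_F E(J_0\cup F)$, whereas in a $(\kappa,\lambda)$-CAEC $U$ preserves only $\kappa$-directed colimits; already for an mAEC (where $\kappa=\aleph_1$) the union $\bigcup_F U(E(J_0\cup F))$ is in general only dense in $U(E(I))$, not equal to it. Corollary~\ref{klaecprespres} is a statement about preservation of presentability ranks, not of colimits, so it cannot repair this step, and restricting attention to a dense set of realizations would require the perturbation property, which is unavailable for a general $(\kappa,\lambda)$-CAEC. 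The second gap is the final count: the number of order-types of finite tuples over $J_0$ is governed by the number of cuts of $J_0$, and a linear order of cardinality $\mu$ can have $2^\mu$ cuts. The classical argument controls this by taking the index set well-ordered, but the preceding lemma forces $I$ to be brimful, and a brimful linear order of size $\nu$ is not a well-order (it contains dense suborders), so you have offered no reason why your $J_0$ admits only $\mu$ cuts; as written the count yields at best $2^\mu$. (A smaller point: ``the number of Skolem terms is bounded by $\lambda_E\leq\mu^+$'' gives a bound of $\mu^+$, not $\mu$; it happens to work out because the term count is strictly below $\lambda_E$, but you should say so.) Both obstacles vanish if you replace the term-by-term analysis with the paper's argument realizing all types over $M_0$ in one $\mu$-sized universal $\bar{M}$.
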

\begin{proof} The proof of the analogue in \cite{LR}, Lemma~7.8, involves showing that any type over an object $M_0$ of size $\mu$ embeddable in the categorical object $M$ is in fact realized in an intermediate $\mu$-universal structure $\bar{M}$ that is also of size $\mu$.  The conclusion that there are at most $\mu$ types over $M_0$ follows from the fact that $|U(\bar{M})|\leq\mu$.  In the present context, this follows from Corollary~\ref{klaecprespres} and the assumption that $\mu^+\triangleright\kappa$.\end{proof}

We complete the proof of Theorem~\ref{stabbelowcat} precisely as in \cite{LR}---the argument there requires only that $U$ preserve $\mu^+$-directed colimits, and we here assume that $\mu^+\triangleright\kappa$, hence also $\mu^+>\kappa$.

As a special case, we have:

\begin{theo}\label{mstabbelowcat} Let $\ck$ be an mAEC.  If $\ck$ is $\nu$-categorical (i.e. $\nu$-$\metr$-categorical), then it is $\mu$-stable (hence $\mu$-$\metr$-stable) for all $\lambda_U+\lambda_E\leq\mu^+\leq\nu$ with $\mu^+\triangleright\aleph_1$.\end{theo}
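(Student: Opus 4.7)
The plan is to derive this as a direct corollary of Theorem~\ref{stabbelowcat}, since the entire preceding section was set up precisely so that mAECs slot into the $(\kappa,\lambda)$-CAEC framework.

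First, I would invoke the proposition following Theorem~\ref{maecsasconcr}: any mAEC $\ck$, equipped with its underlying set functor $U$, is an $(\aleph_1,\lsdk^+)$-CAEC. Under Assumption~\ref{assume}, $\ck$ has arbitrarily large models and satisfies joint embedding and amalgamation, so the resulting $(\aleph_1,\lsdk^+)$-CAEC satisfies the standing assumptions needed for Theorem~\ref{stabbelowcat}. Observe also that $\nu$-$\metr$-categoricity of the mAEC is precisely $\nu$-categoricity of $(\ck,U)$ as a $(\kappa,\lambda)$-CAEC, since by the moreover clause of Theorem~\ref{maecsacc} the category-theoretic size coincides with density character on uncountable objects.

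With these identifications in place, I would apply Theorem~\ref{stabbelowcat} with $\kappa=\aleph_1$ and $\lambda=\lsdk^+$. The conclusion gives $\mu$-Galois stability (in the discrete sense) for all $\mu$ with $\lambda_U+\lambda_E\leq\mu^+\leq\nu$ and $\mu^+\triangleright\aleph_1$, which is exactly the stated range of cardinals.

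Finally, I would note that discrete $\mu$-stability immediately implies $\mu$-$\metr$-stability: if $\gat(M)$ has cardinality at most $\mu$ as a set, then in particular its density character is at most $\mu$, regardless of what metric $\metr$ one puts on it. This is the implication already flagged in the sentence ``$\mu$-stability in the discrete sense certainly implies $\mu$-$\metr$-stability'' in Section~\ref{stabsection}. There is no real obstacle here---the entire point of the $\kappa$-CAEC machinery and the sharp-inequality bookkeeping in Theorem~\ref{stabbelowcat} was to absorb the one subtle feature of mAECs, namely that $U$ fails to preserve arbitrary directed colimits but does preserve $\aleph_1$-directed ones. Once that feature has been handled at the level of $(\kappa,\lambda)$-CAECs, the mAEC statement is a transcription.
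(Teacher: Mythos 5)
Your proposal is correct and matches the paper's own proof, which likewise deduces the theorem by observing that $(\ck,U)$ is an $(\aleph_1,\lsdk^+)$-CAEC, applying Theorem~\ref{stabbelowcat}, and noting that discrete $\mu$-stability implies $\mu$-$\metr$-stability. Your additional remarks identifying $\nu$-$\metr$-categoricity with categorical size via Theorem~\ref{maecsacc} are a sensible elaboration of details the paper leaves implicit.
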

\begin{proof} Since $\ck$, equipped with its underlying set functor $U$, forms an $(\aleph_1,\lsdk^+)$-CAEC, Theorem~\ref{stabbelowcat} implies $\mu$-stability for all $\lambda_U+\lambda_E\leq\mu^+\leq\nu$ with $\mu^+\triangleright\aleph_1$.  Naturally, $\mu$-stability implies $\mu$-$\metr$-stability.\end{proof}

We contrast this with Corollary~5.8 in \cite{HH} and the remark that immediately follows it: there EM-models are used to show that for any mAEC $\ck$ of the form considered here---with amalgamation, joint embedding, perturbation---and, in addition, with $\lsdk=\aleph_0$, if $\ck$ is $\nu$-$\metr$-categorical for some uncountable $\nu$, then $\ck$ is $\mu$-$\metr$-stable in all $\mu$ such that $\mu^{\aleph_0}=\mu$.  Theorem~\ref{mstabbelowcat} applies to $\ck$ with arbitrary $\lsdk$ and, in any case, yields stability in a potentially larger assortment of cardinals $\mu$.  

Recall that $\lambda_U\leq(\lsdk^{\aleph_0})^+$, and thus $\lambda_U\leq (2^{\aleph_0})^+$ in case $\lsdk=\aleph_0$.  We note that if this bound is tight, if $\lambda_E$ is at least $(2^{\aleph_0})^+$, or if GCH is assumed, Remark~\ref{aleph1spec}(1) implies that the cardinals $\mu$ for which $\ck$ is $\mu$-$\bf{d}$-stable, i.e. those $\mu\geq\lambda_U+\lambda_E$ with $\mu^+\triangleright\aleph_1$, are precisely those with $\mu^{\aleph_0}=\mu$, just as in \cite{HH}.  Otherwise, $\mu^+\triangleright\aleph_1$ is a strictly weaker condition.



\section{Saturated Models}\label{satmods}

Recall that an object $K$ in a given category is said to be $\lambda$-saturated if for any $\lambda$-presentable objects $M$ and $N$ and morphisms $g:M\to K$ and $f:M\to N$ there is $h:N\to M$ so that $hf=g$.  We now relate this notion to Galois-saturation, in the sense of \cite{LR} 6.1:

\begin{defi}{\em Let $(\ck,U)$ be an accessible category with directed colimits. We say that a type $(f,a)$ where $f:M\to N$ is \textit{realized in $\ck$} if there is a morphism $g : M \to \ck$ and $b\in U(\ck)$ such that $(f, a)$ and $(g, b)$ are equivalent.\\
Let $\lambda$ be a regular cardinal. We say that K is \textit{$\lambda$-Galois saturated} if for any $g : M \to K$ where $M$ is $\lambda$-presentable and any type $(f,a)$ where $f :M \to N$ there is $b\in U(K)$ such that $(f,a)$ and $(g,b)$ are equivalent.}\end{defi}

\begin{rem}{\em If $(\ck,U)$ arises from an mAEC, this definition corresponds to the $\lambda$-$\metr$-saturation mentioned in \cite{VZ}.}\end{rem}

Recalling Assumption~\ref{assume},

\begin{theo}Let $(\ck,U)$ be a $(\kappa,\lambda)$-CAEC.  For regular cardinals $\nu$ with $\nu^+\triangleright\kappa$ and $\nu^+\geq\lambda_U+\lambda_E$, an object $K\in\ck$ is $\nu$-Galois saturated if and only if it is $\nu$-saturated.\end{theo}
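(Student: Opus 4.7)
The plan is to adapt the proof of Theorem~6.2 in \cite{LR} to the weaker concreteness of a $\kappa$-CAEC. The key auxiliary input is Corollary~\ref{klaecprespres}: the hypothesis $\nu^+\triangleright\kappa$ together with $\nu^+\geq\lambda_U$ guarantees that $U$ preserves $\nu^+$-presentable objects, so in particular $|U(N)|\leq\nu$ whenever $N\in\ck$ is $\nu$-presentable. Throughout we also use that $U$ preserves $\kappa$-directed colimits and that $\ck$ is $\nu^+$-accessible by Remark~\ref{stronger} (since $\nu^+\triangleright\kappa$ and $\nu^+\geq\lambda_U\geq\lambda$).

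For the direction $\nu$-saturated $\Rightarrow$ $\nu$-Galois saturated, let $g:M\to K$ with $M$ $\nu$-presentable and let $(f,a)$ be a Galois type over $M$ with $f:M\to N$ and $a\in U(N)$. The strategy is to replace the pair $(f,a)$ with an equivalent pair whose target is $\nu$-presentable and then apply $\nu$-saturation. Specifically, I would write $N$ as a $\nu^+$-directed colimit of $\nu^+$-presentable $\ck$-subobjects; since $U$ preserves this colimit and $M$ is $\nu$-presentable, a single term $N_1$ of the colimit captures both $f(M)$ and $a$. A L\"owenheim-Skolem-style refinement internal to $N_1$ then produces a $\nu$-presentable $\ck$-subobject $N_0\hookrightarrow N$ with $f(M)\cup\{a\}\subseteq U(N_0)$ and $f$ factoring as $f_0:M\to N_0$. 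Applying $\nu$-saturation to $f_0$ and $g$ yields $h:N_0\to K$ with $hf_0=g$, and $b:=U(h)(a)$ realizes the original type through the amalgamation $N_0\stackrel{h}{\to}K\stackrel{\id}{\to}K$.

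For the direction $\nu$-Galois saturated $\Rightarrow$ $\nu$-saturated, we are given $f:M\to N$ and $g:M\to K$ with $M,N$ both $\nu$-presentable, and we seek $h:N\to K$ with $hf=g$. By Corollary~\ref{klaecprespres}, $|U(N)|\leq\nu$, so enumerate $U(N)=\{a_i\mid i<\nu\}$. I would construct by transfinite recursion a chain of $\nu$-presentable $\ck$-subobjects $M=M_0\hookrightarrow M_1\hookrightarrow\cdots\hookrightarrow N$ together with a compatible chain of $\ck$-embeddings $h_i:M_i\to K$ extending $g$, arranging that $a_j\in U(M_i)$ for all $j<i$. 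At a successor stage, $\nu$-Galois saturation applied to $h_i$ and to the Galois type $\gatp(a_i/M_i)$ yields $b_i\in U(K)$; combined with a small-subobject construction, this produces $M_{i+1}\hookrightarrow N$ containing $a_i$ and the extension $h_{i+1}$ sending $a_i$ to $b_i$. Limit stages are directed colimits of subobjects, which remain subobjects of $N$ and remain $\nu$-presentable as colimits of fewer than $\nu$ many $\nu$-presentable objects. Finally, $M_\nu$ is a $\ck$-subobject of $N$ with $U(M_\nu)\supseteq\{a_i\mid i<\nu\}=U(N)$, so coherence together with the concreteness of monomorphisms forces $M_\nu=N$, and the induced $h_\nu:N\to K$ is the desired factorization.

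The principal obstacle in both directions is securing the existence of $\nu$-presentable $\ck$-subobjects containing prescribed $U$-data of cardinality $<\nu$: a categorical analogue of downward L\"owenheim-Skolem. This is available by axiom in AECs and mAECs (cf.\ Lemma~\ref{ldircolim}), but in a general $\kappa$-CAEC it must be extracted from the interplay of $\nu^+$-accessibility, coherence, concreteness of monomorphisms, and the preservation properties of $U$ just summarized; this is the step where the sharp-inequality bookkeeping is most delicate, and where the two bounds $\nu^+\geq\lambda_U$ and $\nu^+\geq\lambda_E$ earn their keep (the latter via the EM-model-based construction of small universal subobjects in the spirit of Section~5). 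Modulo this technical point, the argument is essentially a transcription of \cite{LR}~6.2 with $\kappa$-directed colimits in place of all directed colimits.
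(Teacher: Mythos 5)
Your overall strategy---transcribe the proof of Theorem~6.2 of \cite{LR}, with Corollary~\ref{klaecprespres} supplying $|U(N)|\leq\nu$ from $\nu^+\triangleright\kappa$ and $\nu^+\geq\lambda_U$---is the paper's, and you have correctly located one of the two places where the weaker concreteness hypothesis actually bites. However, the back-and-forth you sketch for the direction ``Galois saturated implies saturated'' has a genuine gap at the successor step. Applying $\nu$-Galois saturation to $h_i:M_i\to K$ and the type $(\iota_i,a_i)$, where $\iota_i:M_i\to N$ is the inclusion, yields only an element $b_i\in U(K)$ together with an amalgam $u:N\to N'$, $v:K\to N'$ satisfying $u\iota_i=vh_i$ and $U(u)(a_i)=U(v)(b_i)$. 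This controls where $U(M_i)\cup\{a_i\}$ goes, but any $\nu$-presentable subobject $M_{i+1}$ of $N$ containing $M_i$ and $a_i$ will in general contain further elements, and nothing forces their images under $U(u)$ to land in $U(v)(U(K))$; so the morphism $h_{i+1}:M_{i+1}\to K$ extending $h_i$ and sending $a_i$ to $b_i$ need not exist. This is exactly why the argument in \cite{LR}, which the paper follows verbatim, does not carry honest morphisms defined on subobjects of $N$ but rather an increasing chain of \emph{partial set-embeddings} $t_i$ of $U(N)$ into the $U(M_i)$, invoking coherence only once, at the end, to convert the total map into a morphism.

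Once the construction is set up with partial maps, the one point the paper genuinely has to repair---and which your write-up omits---appears at limit stages: one wants $t_j=\bigcup_{i<j}t_i$, but this union takes values in $\colim_{i<j} U(M_i)$, which need not coincide with $U(\colim_{i<j} M_i)$ when $j$ has small cofinality; the paper composes with the canonical map $\colim U(M_i)\to U(\colim M_i)$, which is injective by concreteness of monomorphisms and a diagram chase. Two smaller issues: the ``L\"owenheim--Skolem-style'' production of $\nu$-presentable subobjects containing prescribed data, which you rightly identify as the principal obstacle, is asserted rather than established, and your suggestion that $\nu^+\geq\lambda_E$ earns its keep via an EM-based construction of small universal subobjects does not correspond to anything in the paper's proof (the bound is simply inherited from the blanket hypotheses under which the \cite{LR} argument is run). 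The forward direction is essentially fine, though the factorization of $(f,a)$ through a $\nu$-presentable target is more cleanly obtained from a $\nu$-directed colimit presentation of $N$ together with $\nu\geq\kappa$ (which gives concreteness of that colimit) than from the unproved LS-style step.
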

\begin{proof} The proof proceeds exactly as in the proof of Theorem 6.2 in \cite{LR}, except in two details: first, in the limit stages of the inductive construction of the map $h:N\to K$ witnessing the $\nu$-saturation of $K$.  Because our category is closed under directed colimits we may simply take colimits at limit stage $j$, just as in that proof, but potential nonconcreteness of the directed colimits forces us to be slightly more careful with the increasing chain of partial set-embeddings $t_i$ of $U(N)$ into $U(M_i)$---here we cannot simply take $t_j=\cup_{i<j} t_i$ given that the codomain, $\colim U(M_i)$, need not correspond to the desired $U(\colim M_i)$.  Fortunately, we may obtain $t_j$ by composition of $\cup_{i<j}t_i$ with the canonical map $\colim U(M_i)\to U(\colim M_i)$ which, by concreteness of monomorphisms in $\ck$ and a short diagram chase, is an injection.  With this modification, the rest of the construction can be carried out as before.

The only other change is that the fact that $|U(N)|\leq\nu$, which is needed for the enumeration at the heart of the inductive construction, now follows from the preservation by $U$ of $\nu$-presentable objects with $\nu$ satisfying the inequalities in the statement of the theorem, rather than the stronger eventual preservation result used in \cite{LR}.\end{proof}

We note that certain results on the existence of saturated models in \cite{LR} also hold, albeit weakened through the introduction of the sharp inequality condition.

\begin{propo}Let $\ck$ be a $(\kappa,\lambda)$-CAEC.  If $\ck$ is $\nu^+$-categorical for $\nu^{+}\geq\lambda_U+\lambda_E$ and $\nu^{+}\triangleright\kappa$, then the unique object of size $\nu^+$ is saturated.\end{propo}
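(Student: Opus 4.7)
The plan is to construct a $\nu^+$-Galois-saturated object of size $\nu^+$ directly via a standard ``realize all types'' chain argument driven by stability, and then invoke both $\nu^+$-categoricity and the preceding theorem on the equivalence of category-theoretic and Galois saturation. First, I would apply Theorem~\ref{stabbelowcat} with $\mu=\nu$ and categoricity cardinal $\nu^+$: the hypotheses $\lambda_U+\lambda_E\leq\nu^+\leq\nu^+$ and $\nu^+\triangleright\kappa$ are precisely those assumed in the statement, so $\ck$ is $\nu$-Galois-stable.

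Next, I would construct a continuous, strictly increasing chain $\langle M_\alpha\,|\,\alpha<\nu^+\rangle$ of objects of size $\nu$ in $\ck$ such that every Galois type over $M_\alpha$ is realized in $M_{\alpha+1}$. At successor stages, $\nu$-stability bounds the number of Galois types over $M_\alpha$ by $\nu$; one realizes these one at a time, amalgamating along the corresponding type-representing embeddings, and then uses an internal L\"owenheim--Skolem-style argument (available in any accessible category with directed colimits, analogous to Lemma~\ref{ldircolim}) to cut the result down to an object of size $\nu$ still containing all the new realizers. At limit stages one passes to the directed colimit, which has size $\leq\nu\cdot|\alpha|=\nu$. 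Largeness of $\ck$ allows each successor to be chosen a proper extension, so the final colimit $S:=\colim_{\alpha<\nu^+}M_\alpha$ has size exactly $\nu^+$.

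To verify that $S$ is $\nu^+$-Galois saturated, take any $g:M\to S$ with $M$ $\nu^+$-presentable and any Galois type $(f,a)$ with $f:M\to N$. Since $M$ is $\nu^+$-presentable and $S$ is the $\nu^+$-directed colimit of the $M_\alpha$'s, the map $g$ factors through some $M_\alpha$; pulling the type back along this factorization yields a Galois type over the image of $M$ in $M_\alpha$, which is realized in $M_{\alpha+1}$, and the colimit cocone transports the realizer into $U(S)$. By $\nu^+$-categoricity, the unique object of size $\nu^+$ is isomorphic to $S$. Transitivity of $\triangleleft$ together with $\nu^+\triangleleft(\nu^+)^+$ (Example~\ref{sless}(2)) gives $(\nu^+)^+\triangleright\kappa$, so the preceding theorem applies at level $\nu^+$, converting $\nu^+$-Galois saturation into $\nu^+$-saturation, i.e., saturation.

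The main technical obstacle is the bookkeeping at successor stages: realizing all $\nu$ Galois types over $M_\alpha$ while keeping $|M_{\alpha+1}|=\nu$. This demands both the stability bound and a careful L\"owenheim--Skolem passage from possibly large type-realizing extensions down to small subobjects containing the realizers---a step routine in the AEC setting but requiring slightly more care in a general $(\kappa,\lambda)$-CAEC, where ``size'' is a purely category-theoretic notion tied to presentability rank rather than to an underlying cardinality.
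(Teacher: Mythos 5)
Your proposal is correct and follows essentially the same route as the paper: build a continuous $\nu^+$-chain of size-$\nu$ objects, use Theorem~\ref{stabbelowcat} (with $\mu=\nu$) to realize all Galois types at successor stages, take colimits at limits, and identify the resulting object with the unique object of size $\nu^+$ via categoricity. The paper's own proof is terser---it leaves the successor-stage bookkeeping and the passage from Galois-saturation to categorical saturation implicit, and instead emphasizes that the final colimit is concrete because $\nu^+$ is regular and exceeds $\kappa$---but the underlying argument is the same.
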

\begin{proof}As in \cite{LR}, we choose a model $M_0$ of size $\nu$ (notice that such an object exists, by Theorem~\ref{kaecemfunct}) and build a continuous chain $\langle M_i\,|\,i<\nu^+\rangle$ where each $M_i$ is of size $\nu$ and $M_{i+1}$ realizes all types over $M_i$.  By Theorem~\ref{stabbelowcat}, $\ck$ is $\nu$-stable, so the successor step from $M_i$ to $M_{i+1}$ is easily accomplished.  At limit stages, we take colimits.  Potential nonconcreteness is not an issue, although we might worry about the final stage,
$$M_{\nu^+}=\colim_{i<\nu^+} M_i$$
Fortunately, $\nu^+$ is regular and strictly larger than $\kappa$, so this colimit is in fact concrete: $U(M_{\nu^+})=\bigcup_{i<\nu^+}U(M_i)$.\end{proof}

In fact, this proof establishes more:

\begin{propo}Let $\ck$ be a $(\kappa,\lambda)$-CAEC.  If $\ck$ is $\nu$-categorical, then for any $\lambda_U+\lambda_E\leq\mu^+<\nu$ with $\mu^+\triangleright\kappa$, $\ck$ contains a saturated object of size $\mu^+$.\end{propo}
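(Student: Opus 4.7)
The plan is to reuse the construction from the previous proposition almost verbatim, the only difference being that we now produce a saturated object of size $\mu^+$ directly rather than identifying it with a categorical object of that size; the hypothesis $\mu^+<\nu$ is used only to place us strictly below the categoricity cardinal, which in turn lets us invoke $\mu$-stability below $\nu$.

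First, by Theorem~\ref{kaecemfunct} the category $\ck$ has objects of arbitrarily large size (in particular of size $\mu$), so choose some $M_0\in\ck$ of size $\mu$. Next, Theorem~\ref{stabbelowcat} applies: the inequalities $\lambda_U+\lambda_E\leq\mu^+\leq\nu$ and $\mu^+\triangleright\kappa$ are exactly the hypotheses of that theorem, so $\ck$ is $\mu$-Galois stable. I would then build a continuous chain $\langle M_i \,|\, i<\mu^+\rangle$ of objects of size $\mu$ with the property that $M_{i+1}$ realizes every Galois type over $M_i$: successor stages are handled by $\mu$-stability and amalgamation (there are only $\mu$-many types to realize, so a single extension of size $\mu$ suffices), and limit stages by taking directed colimits in $\ck$, which remain of size $\mu$ because $\mu^+$ is regular.

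Now set $M=\colim_{i<\mu^+}M_i$. Since $\mu^+\triangleright\kappa$ forces $\mu^+>\kappa$, the $\mu^+$-directed colimit is concrete, so $U(M)=\bigcup_{i<\mu^+}U(M_i)$ has cardinality at most $\mu^+$; combined with the presentability estimate coming from Corollary~\ref{klaecprespres}, this makes $M$ of size exactly $\mu^+$. Finally, to check that $M$ is $\mu^+$-Galois saturated, let $g:N\to M$ with $N$ any $\mu^+$-presentable object (equivalently, of size $\leq\mu$). Since the colimit diagram $\langle M_i\rangle$ is $\mu^+$-directed and $N$ is $\mu^+$-presentable, $g$ factors essentially uniquely as $g=\phi_i\circ g'$ for some $g':N\to M_i$. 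Any Galois type over $N$ then pulls back to a type over $g'[N]\ksst M_i$, which (after passing through $M_i$) is realized in $M_{i+1}$, hence in $M$.

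The only step that could conceivably cause difficulty is the concreteness and size computation at the top of the chain, but this is exactly the point handled in the previous proposition: concreteness of the final colimit follows from $\mu^+>\kappa$, and together with the successor-stage stability bound it pins down the size of $M$ as $\mu^+$. No new machinery is required beyond what has been developed in Sections \ref{kaec}--\ref{satmods}.
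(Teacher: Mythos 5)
Your proposal is correct and matches the paper's argument: the paper proves this proposition simply by observing that the construction used for the preceding proposition (a continuous $\mu^+$-chain of size-$\mu$ models, each successor realizing all types over its predecessor, with the final colimit concrete because $\mu^+>\kappa$) ``establishes more,'' which is exactly the construction you carry out. Your additional remarks on the size computation and the factorization of maps from $\mu^+$-presentable objects through the chain are consistent with the machinery of Sections~\ref{stabsection}--\ref{satmods}.
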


For mAECs, in particular, this gives many saturated models.  Recalling, again, that $\lambda_U\leq(\lsdk^{\aleph_0})^+$ for any mAEC $\ck$, we have:

\begin{coro}Let $\ck$ be an mAEC.  If $\ck$ is $\nu$-categorical, then for any $\lambda_U+\lambda_E\leq\mu^+<\nu$ with $\mu^+\triangleright\aleph_1$, $\ck$ contains a saturated object of size $\mu^+$.\end{coro}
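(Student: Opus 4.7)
The plan is simply to specialize the preceding Proposition to the case of mAECs. Under Assumption~\ref{assume}, any mAEC $\ck$ is large and satisfies the joint embedding and amalgamation properties, and Theorem~\ref{maecsasconcr} together with the proposition immediately following it shows that $(\ck,U)$ is then a large $(\aleph_1,\lsdk^+)$-CAEC.

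I would therefore invoke the preceding Proposition with $\kappa=\aleph_1$. The hypothesis $\mu^+\triangleright\kappa$ becomes $\mu^+\triangleright\aleph_1$, and the constraint $\lambda_U+\lambda_E\leq\mu^+<\nu$ carries over verbatim; so for every such $\mu$ the Proposition produces a saturated object of size $\mu^+$ in $\ck$, which is precisely what is claimed.

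There is no real obstacle at the corollary level: the substantive work was done in the preceding Proposition, whose strategy is to choose $M_0$ of size $\mu$ via the EM-functor (Corollary~\ref{emfunct}), build a continuous chain $\langle M_i\,|\,i<\mu^+\rangle$ of objects of size $\mu$ in which $M_{i+1}$ realizes all Galois types over $M_i$ (possible by the $\mu$-stability furnished by Theorem~\ref{mstabbelowcat}), and then take the colimit $M_{\mu^+}=\colim_{i<\mu^+}M_i$, which is concrete because $\mu^+$ is regular and $\mu^+>\aleph_1$. The only point worth noting in passing, as a guide to the reader, is Remark~\ref{pressize}: the upper bound $\lambda_U\leq(\lsdk^{\aleph_0})^+$ makes the lower endpoint $\lambda_U+\lambda_E$ of the admissible range of $\mu^+$ fully explicit in terms of $\lsdk$ and $\lambda_E$.
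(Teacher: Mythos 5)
Your proposal is correct and matches the paper's own (implicit) argument exactly: the corollary is obtained by specializing the preceding Proposition to the case $\kappa=\aleph_1$, using the fact that an mAEC with arbitrarily large models is a large $(\aleph_1,\lsdk^+)$-CAEC. Your recap of the chain construction and the concreteness of the colimit at stage $\mu^+$ is likewise faithful to the paper.
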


While this result misses limit cardinals sharply larger than $\aleph_1$, Theorem~\ref{stabbelowcat} also allows us to infer the existence of limit models in such cases, which can, in certain circumstances, stand in for saturated models.  In particular:

\begin{propo}Let $\ck$ be an mAEC.  If $\ck$ is $\nu$-categorical, then for any $\mu$ with $\lambda_U+\lambda_E\leq\mu^+<\nu$ and $\mu^+\triangleright\aleph_1$, and any $M\in\ck$ of size $\mu$, there is a limit model $M'$ over $M$ which is also of size $\mu$.\end{propo}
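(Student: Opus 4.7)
The plan is to mimic the construction in the preceding proposition, but to halt the continuous $\ksst$-increasing chain at any prescribed limit ordinal $\delta<\mu^+$ (of whichever cofinality one wishes $M'$ to have). I would set $M_0=M$ and build $\langle M_i\,|\,i<\delta\rangle$ in $\ck$ with each $M_i$ of size $\mu$ and each $M_{i+1}$ universal over $M_i$, then take $M'=\colim_{i<\delta} M_i$.

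The key input is Theorem~\ref{mstabbelowcat}: under the stated hypotheses $\lambda_U+\lambda_E\leq\mu^+<\nu$ and $\mu^+\triangleright\aleph_1$, $\nu$-categoricity yields $\mu$-stability, so there are at most $\mu$ Galois types over any object of size $\mu$. At a successor step I would enumerate the types over $M_i$ and iteratively realize them by amalgamation, applying the Löwenheim--Skolem axiom at each stage to keep the density character bounded by $\mu$ (note that $\mu\geq\lsdk$ by our convention $\lambda_U\geq\lsdk^+$); iterating such a realization step countably many times produces an $M_{i+1}$ of size $\mu$ that is universal over $M_i$ in the standard back-and-forth sense, i.e.\ any $\ck$-extension of $M_i$ of size $\mu$ embeds into $M_{i+1}$ over $M_i$. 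At a limit stage $j\leq\delta$ I would take the colimit in $\ck$, which exists because mAECs are closed under directed colimits; its density character is at most $\mu\cdot|j|\leq\mu$ since $j<\mu^+$, so by Theorem~\ref{maecsacc} it has size $\mu$. Applied at $j=\delta$, this gives the required $M'$.

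The main obstacle is really just bookkeeping rather than any genuine mathematical difficulty: each successor step must stay inside the class of objects of size $\mu$, which depends simultaneously on $\mu$-stability bounding the type count and on the Löwenheim--Skolem axiom preventing the density character from drifting above $\mu$. Since $\delta$ is arbitrary subject to $\delta<\mu^+$, the same construction in fact produces a limit model over $M$ of each possible cofinality below $\mu^+$.
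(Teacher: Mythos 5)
Your overall architecture---a continuous $\ksst$-increasing chain of length $\delta<\mu^+$ in which each $M_{i+1}$ is universal over $M_i$, with the colimit taken at $\delta$---is exactly what a limit model is, and the density-character bookkeeping at limit stages is fine. Note, though, that the paper does not redo this construction: its proof is simply to extract $\mu$-$\metr$-stability from Theorem~\ref{stabbelowcat} and then cite Corollary~3.7 of \cite{VZ}, which encapsulates precisely the existence of universal extensions and hence of limit models under stability. You are in effect re-proving that cited result, so the burden falls on your successor step, and that is where there is a genuine gap.

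The problem is the claim that ``iterating such a realization step countably many times produces an $M_{i+1}$ of size $\mu$ that is universal over $M_i$.'' Realizing all Galois types over a model, even $\omega$ times over, does not yield universality when $\mu>\aleph_0$. To embed an arbitrary extension $N\succeq M_i$ of size $\mu$ over $M_i$, one must run a back-and-forth against a dense subset of $N$ of cardinality $\mu$, extending a partial embedding one element at a time; at each of these $\mu$ many steps the type of the new element over the current (image) submodel must be realized in a strictly later stage of the approximating chain. An $\omega$-chain is exhausted after absorbing only countably many points of $N$, so the resulting model realizes all types over each finite stage but need not be universal over $M_i$. The standard fix---and what \cite{VZ} actually does---is to build a continuous chain of length $\mu$ inside the successor step, $\langle M_i^\alpha\,|\,\alpha<\mu\rangle$ with $M_i^{\alpha+1}$ realizing all types over $M_i^\alpha$, and then prove universality of the union by the back-and-forth just described, using amalgamation together with the monotonicity of type realization (a model realizing all types over $M_i^\alpha$ realizes all types over any $\ksst$-submodel of it). Your appeal to $\mu$-stability and L\"owenheim--Skolem to keep each stage of size $\mu$ is correct as far as it goes, but without lengthening the inner iteration to $\mu$ and supplying the universality argument, the key property of the chain is not established.
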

\begin{proof} Theorem~\ref{stabbelowcat} guarantees $\mu$-$\metr$-stability in all such $\mu$.  The result then follows from Corollary 3.7 in \cite{VZ}.\end{proof}

\section{Acknowledgements}

The first author wishes to acknowledge John Baldwin and Will Boney for highly useful discussions on the sidelines of the Beyond First Order Model Theory special session at the 2015 Joint Math Meetings.  The authors would also like to acknowledge the comments of the anonymous referee, which have led to substantial improvements in this text.


\begin{thebibliography}{EAPT}
\itemsep=2pt
 
\bibitem{A} N. Ackerman, {\em Completeness in generalized ultrametric spaces}, P-Adic Numbers Ultrametric Anal. Appl. vol. 5, no. 1 (2013), 89-105. 
 
\bibitem{AR} J. Ad\'{a}mek and J. Rosick\'{y}, {\em Locally Presentable and Accessible Categories}, Cambridge University
Press 1994.
 
\bibitem{Ba} J. Baldwin, {\em Categoricity}, AMS 2009.

\bibitem{Ba1} J. Baldwin, {\em Ehrenfeucht-Mostowski models in abstract elementary classes}.  Logic and Its Applications (Y. Zhang and A. Blass, eds.), vol. 380, Contemporary Mathematics.

\bibitem{BR} T. Beke and J. Rosick\'y, {\em Abstract elementary classes and accessible categories}, Annals Pure Appl. Logic vol. 163 (2012), 2008-2017.

\bibitem{Bpres} W. Boney, {\em A presentation theorem for continuous logic and metric abstract elementary classes}, arXiv:1408.3624.

\bibitem{B} W. Boney, {\em Tameness from large cardinal axioms}, J. Symb. Logic, vol. 79, no. 4 (2014), 1092-1119.

\bibitem{muaecs} W. Boney, R. Grossberg, M. Lieberman, J. Rosick\'y, and S. Vasey, {\em $\mu$-Abstract Elementary Classes and other generalizations},  	arXiv:1509.07377.

\bibitem{GL} R. Garner and S. Lack, {\em Grothendieck quasitoposes}, J. Alg. 355 (2012), 111-126.

\bibitem{HH} A. Hirvonen and T. Hyttinen. {\em Categoricity in homogeneous complete metric spaces}, Archive of Mathematical Logic, vol. 48 (2009), 269-322.  

\bibitem{HI} C. Henson and J. Iovino. {\em Ultraproducts in analysis}, {\em\it Analysis and Logic (Mons, 1997)}, London Math. Soc. Lecture Note Ser., vol. 262 (2002), 1-110. Cambridge Univ. Press, Cambridge.

\bibitem{I} J. Iovino, {\em Stable Banach spaces and Banach space structures. I. Fundamentals.}, in {Models, algebras, and proofs}, Lecture Notes in Pure and Appl. Math., no. 203, 77-95.

\bibitem{kirby} J. Kirby, {\em Abstract elementary categories}, preprint, 2008.

\bibitem{L} M. Lieberman, {\em Category theoretic aspects of abstract elementary classes}, Annals Pure Appl. Logic, vol. 162 (2011), 903-915.

\bibitem{Ltop} M. Lieberman, {\em A topology for types in abstract elementary classes}, Math. Logic Quarterly, vol. 57, no. 2 (2011), 204-216.

\bibitem{LR} M. Lieberman and J. Rosick\'y, {\em Classification theory for accessible categories}, arxiv:1404.2528.  To appear in J. of Symb. Logic.
 
\bibitem{MP} M. Makkai and R. Par\' e, {\em Accessible Categories: The Foundations of Categorical Model Theory}, AMS 1989.

\bibitem{PR} S. Priess-Crampe and P. Ribenboim, {\em Generalized ultrametric spaces I.}, Abh. Math. Sem. Univ. Hamburg 66 (1966), 55-73.

\bibitem{R} J. Rosick\'y, {\em Accessible categories, saturation and categoricity}, J. Symb. Logic 62 (1997), 891-901.

\bibitem{V} S. Vasey, {\em Infinitary stability theory}, arXiv:1412.3313.

\bibitem{VZ} A. Villaveces and P. Zambrano, {\em Limit models in metric abstract elementary classes: the categorical case}, arXiv:1304.6797.

\bibitem{Z} P. Zambrano, {\em Around superstability in metric abstract elementary classes}, PhD thesis.

\end{thebibliography}
\end{document}